\newtheorem{theorem}{Theorem}[section]
\newtheorem{lemma}[theorem]{Lemma}
\newtheorem{corollary}[theorem]{Corollary}
\newtheorem{proposition}[theorem]{Proposition}
\newtheorem{conjecture}[theorem]{Conjecture}
\newtheorem{remark}[theorem]{Remark}
\newtheorem{notation}[theorem]{Notation}
\newtheorem{example}[theorem]{Example}
\newtheorem{definition}[theorem]{Definition} 
\numberwithin{equation}{section}
\newcommand{\FF}{\mathbb F}
\title{Some cases of Oort's conjecture about Newton polygons of curves}
\author{Rachel Pries}
\address{Department of Mathematics, 
Colorado State University, 
Fort Collins, CO 80523, USA, pries@colostate.edu}
\date{\today}
\begin{document}

\begin{abstract}
This paper contains a method to prove the existence of smooth curves in positive characteristic
whose Jacobians have unusual Newton polygons.  
Using this method, I give a new proof that 
there exist supersingular curves of genus $4$ in every prime characteristic.
More generally, the main result of the paper is that, 
for every $g \geq 4$ and prime $p$, 
every Newton polygon whose $p$-rank is at least $g-4$ occurs for a smooth curve of genus $g$ in characteristic $p$.
In addition, this method resolves some cases of Oort's conjecture about Newton polygons of curves.

Keywords:
curve,  Jacobian,  abelian variety,  moduli space,  positive characteristic,  Frobenius,  Newton polygon,  
$p$-rank,  supersingular

\noindent
MSC20: primary 11G10, 11G20, 11M38, 14H10, 14H40; 
secondary 14G10, 14G15.
\end{abstract}



\maketitle

\section{Introduction}

Fix a prime number $p$.
The Newton polygon stratification of the moduli space $\mathcal{A}_g=\mathcal{A}_{g, \FF_p}$ of principally polarized abelian varieties of dimension $g$ in characteristic $p$ is well-understood, e.g., \cite{lioort}, \cite{JO00}, \cite{oort01}, 
\cite{oortNPgro}, and \cite{CO11}.
We say that a Newton polygon $\xi$ occurs on the moduli space $\mathcal{M}_g=\mathcal{M}_{g, \FF_p}$
of smooth curves of genus $g$ in characteristic $p$
if there exists a smooth curve of genus $g$ defined over 
$\overline{\FF}_p$ whose Jacobian has Newton polygon $\xi$.

There are some existence results showing that certain Newton polygons occur on ${\mathcal M}_g$;
see \cite{priesCurrent} for a survey with references.
Most of these results are
based on the computation of Newton polygons for curves with non-trivial automorphism group.
The supersingular Newton polygon is the one whose slopes are all $1/2$.
Recently, Harashita, Kudo, and Senda 
proved that there exists a supersingular smooth curve of genus $4$ in every prime characteristic \cite[Corollary~1.2]{khs20}.
For $g \geq 5$, there are almost no results about which Newton polygons occur on $\mathcal{M}_g$ 
for every prime characteristic.

This paper was inspired by a recent conversation with Oort, in which we discussed
a more geometric method for studying the Newton polygons that occur on $\mathcal{M}_g$. 
This method applies when the codimension of the Newton polygon stratum in $\mathcal{A}_g$
is small.
When the method applies, the proofs involved are short.

To illustrate this method, I give a new proof of \cite[Corollary~1.2]{khs20} here.
For $g \geq 1$, let $\sigma_g$ denote the supersingular Newton polygon of height $2g$.
Let $\mathcal{M}_g[\sigma_g]$ (resp.\ $\mathcal{A}_g[\sigma_g]$) denote the supersingular locus 
of $\mathcal{M}_g$ (resp.\ $\mathcal{A}_g$).

\begin{theorem} \label{T4supersingular}  
For every prime $p$: there exists a smooth curve of genus $4$ in characteristic $p$ that is supersingular; 
thus $\mathcal{M}_4[\sigma_4]$ is non-empty and its irreducible components have dimension at least $3$
in characteristic $p$. 
\end{theorem}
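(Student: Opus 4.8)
The plan is to exhibit $\mathcal{A}_4[\sigma_4]$ as an equidimensional closed substack of $\mathcal{A}_4$ which, after being cut by the Torelli locus, is still large enough that it cannot be swept entirely into the boundary of the Torelli locus. Write $\mathcal{T}_4 \subseteq \mathcal{A}_4$ for the (open) Torelli locus, i.e.\ the image of the Torelli morphism $\mathcal{M}_4 \to \mathcal{A}_4$, and $\overline{\mathcal{T}_4}$ for its closure. The only explicit input is the construction of a starting point: for every prime $p$ there is a supersingular elliptic curve $E/\overline{\FF}_p$, and gluing four copies of $E$ along a tree of nodes produces a stable curve $C_0$ of compact type and genus $4$ with $\mathrm{Jac}(C_0) \cong E^4$ equipped with its product principal polarization, which is supersingular. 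Hence the class $x = [E^4]$ lies in $\overline{\mathcal{T}_4} \cap \mathcal{A}_4[\sigma_4]$; in particular this intersection is non-empty.

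Next I would record the two structural facts that make the dimension count work. By \cite{lioort}, for every $g \geq 1$ the supersingular locus $\mathcal{A}_g[\sigma_g]$ is equidimensional of dimension $\lfloor g^2/4 \rfloor$; in particular $\dim \mathcal{A}_4[\sigma_4] = 4$, while $\dim \mathcal{A}_1[\sigma_1] = 0$, $\dim \mathcal{A}_2[\sigma_2] = 1$, and $\dim \mathcal{A}_3[\sigma_3] = 2$. Second, since $\mathcal{M}_4$ is irreducible of dimension $3 \cdot 4 - 3 = 9$ while $\dim \mathcal{A}_4 = 10$, the closure $\overline{\mathcal{T}_4}$ is an irreducible divisor in the smooth Deligne--Mumford stack $\mathcal{A}_4$, hence locally principal. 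Consequently, if $Z$ is any irreducible component of $\mathcal{A}_4[\sigma_4]$, so that $\dim Z = 4$, then by Krull's principal ideal theorem every irreducible component of $Z \cap \overline{\mathcal{T}_4}$ has dimension at least $3$. Taking for $Z$ a component through $x$ produces an irreducible $W \subseteq \overline{\mathcal{T}_4} \cap \mathcal{A}_4[\sigma_4]$ with $x \in W$ and $\dim W \geq 3$.

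It then remains to see that $W$ meets $\mathcal{T}_4 = \mathcal{M}_4$, i.e.\ that it contains the class of a smooth curve and not only decomposable abelian varieties. By standard properties of the Torelli morphism, $\overline{\mathcal{T}_4} \setminus \mathcal{T}_4$ is contained in the locus of decomposable principally polarized abelian varieties, and a product $A_1 \times A_2$ is supersingular exactly when both $A_1$ and $A_2$ are. Running over the partitions $4 = 3{+}1 = 2{+}2 = 2{+}1{+}1 = 1{+}1{+}1{+}1$ and using the dimensions above, the supersingular part of $\overline{\mathcal{T}_4} \setminus \mathcal{T}_4$ has dimension at most $\max(2{+}0,\, 1{+}1,\, 1{+}0{+}0,\, 0) = 2$. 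Since $\dim W \geq 3$, $W$ is not contained in this locus, so $W \cap \mathcal{T}_4$ is a non-empty open dense subset of $W$, of dimension $\geq 3$, and it corresponds under Torelli to a subvariety of $\mathcal{M}_4[\sigma_4]$. This yields a smooth supersingular curve of genus $4$. Finally, the same Krull argument applied to every component $Z$ of $\mathcal{A}_4[\sigma_4]$ shows that every irreducible component of $\overline{\mathcal{T}_4} \cap \mathcal{A}_4[\sigma_4]$ has dimension $\geq 3$, hence so does every component of $\mathcal{M}_4[\sigma_4]$.

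I expect the main obstacle to be importing the two non-elementary inputs in exactly the form needed: not merely that $\mathcal{A}_4[\sigma_4]$ has dimension $4$, but that it — and the lower-genus supersingular loci — are \emph{equidimensional} of the stated dimension. The equidimensionality in genus $4$ is what upgrades the existence of one smooth supersingular curve to the statement that \emph{every} component of $\mathcal{M}_4[\sigma_4]$ has dimension at least $3$, while the genus $\leq 3$ dimension bounds are precisely what keep the $3$-dimensional family from being absorbed into the Torelli boundary. Conceptually, the proof is short only because $g = 4$ is the largest genus for which $\overline{\mathcal{T}_g}$ is a divisor in $\mathcal{A}_g$; for $g \geq 5$ one cannot cut $\mathcal{A}_g[\sigma_g]$ by a single hypersurface, and a different mechanism is needed.
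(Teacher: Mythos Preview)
Your proof is correct and follows essentially the same route as the paper's: produce a decomposable supersingular point on the closed Torelli locus, use a codimension/intersection bound in the smooth stack $\mathcal{A}_4$ to force every component of $\overline{\mathcal{T}_4}\cap\mathcal{A}_4[\sigma_4]$ to have dimension $\geq 3$, and then observe that the decomposable (boundary) part of this intersection has dimension $\leq 2$. The only cosmetic difference is that you invoke Krull's principal ideal theorem via the fact that $\overline{\mathcal{T}_4}$ is a locally principal divisor, whereas the paper phrases the same step as the general inequality $\mathrm{codim}(Y\cap Z)\leq \mathrm{codim}(Y)+\mathrm{codim}(Z)$ in a smooth stack; the latter formulation is what the paper later abstracts to handle cases where the Torelli locus has codimension greater than one, so your closing remark that the divisor trick is special to $g=4$ is accurate for your phrasing but not for the underlying mechanism.
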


This method does not give a new proof of \cite[Theorem~1.1]{khs20}, which states that 
there exists a supersingular smooth curve of genus $4$ with $a$-number $a \geq 3$ in every positive characteristic $p > 3$.

\begin{proof}[Proof of Theorem~\ref{T4supersingular}]
Over $\overline{\mathbb{F}}_p$,
there exists a stable curve $C$ of genus $4$ that is singular and supersingular. 
For example, this can be produced by taking a chain of four supersingular elliptic curves, clutched together
at ordinary double points.
The Jacobian of $C$ is a principally polarized abelian variety of dimension 4 that is supersingular.
As such, it is represented by a point in ${\mathcal A}_4[\sigma_4] \cap T_4$, where
$T_4$ is the locus of Jacobians of stable curves of genus $4$. 
 
The codimension of $\mathcal{A}_4[\sigma_4]$ in $\mathcal{A}_4$ is $10-4=6$.
The codimension of $T_4 \cap \mathcal{A}_4$ in $\mathcal{A}_4$ is $10-9=1$.
Since $\mathcal{A}_4$ is a smooth stack, the codimension of a non-empty intersection of two substacks is at
most the sum of their codimensions \cite[page 614]{V:stack}.  It follows that
$\mathrm{codim}(\mathcal{A}_4[\sigma_4] \cap T_4, \mathcal{A}_4) \leq 7$.  
To summarize, $\mathcal{A}_4[\sigma_4] \cap T_4$ is non-empty and each of its irreducible components 
has dimension at least $3$.

Let $\delta$ denote the locus in $\mathcal{A}_4[\sigma_4] \cap T_4$
whose points represent the Jacobian of a curve $D$ that is stable but not smooth.
Since its Jacobian is an abelian variety, the curve $D$ has compact type, as defined in Section~\ref{Scompacttype}.
So its Jacobian is a principally polarized abelian fourfold that decomposes, 
with the product polarization.  

Then $\mathrm{dim}(\delta) \leq 2$.
This is because points in $\delta$ parametrize objects either of the form $E \oplus X$ where $E$ is a supersingular 
elliptic curve and $X$ is a supersingular abelian threefold, or of the form $X \oplus X'$ where $X,X'$ are supersingular 
abelian surfaces.  In the former case, the dimension is
$\mathrm{dim}(\mathcal{A}_1[\sigma_1] \oplus \mathcal{A}_3[\sigma_3]) = 0 + 2 =2$.
In the latter case, the dimension is $\mathrm{dim}(\mathcal{A}_2[\sigma_2] \oplus \mathcal{A}_2[\sigma_2]) =1 + 1=2$.
Since $2<3$, every generic geometric point of $\mathcal{A}_4[\sigma_4] \cap T_4$ represents 
the Jacobian of a supersingular curve of genus $4$ which is smooth.

Thus, in every prime characteristic, ${\mathcal M}_4[\sigma_4]$ is non-empty, and so
there exists a smooth curve of genus $4$ that is supersingular.
If $R$ is an irreducible component of 
${\mathcal M}_4[\sigma_4]$, then the image of $R$ under the Torelli morphism is open and dense 
in an irreducible component of $\mathcal{A}_4[\sigma_4] \cap T_4$; so $\mathrm{dim}(R) \geq 3$, which completes the proof.
\end{proof}

This proof is a special case of a method described in Section~\ref{Sgeneralmethod}; 
the full details are given above so that the proof is self-contained.
This method also applies for some Newton polygons that were not previously known to occur.
In particular, it applies to two new Newton polygons in dimension $4$, which yields the following result.

\begin{corollary} See Corollary~\ref{Cg4complete}.
Every symmetric Newton polygon in dimension $g=4$ occurs on $\mathcal{M}_4$,
in every prime characteristic $p$. 
\end{corollary}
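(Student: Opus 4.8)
The plan is to run, for each of the eight symmetric Newton polygons of dimension $4$, the degeneration-plus-dimension argument used to prove Theorem~\ref{T4supersingular}. First I would list these Newton polygons by $p$-rank $f$: the ordinary one ($f=4$); the unique one with $f=3$ and the unique one with $f=2$; for $f=1$ the two Newton polygons $(0,(1/2)^6,1)$ and $\rho:=(0,(1/3)^3,(2/3)^3,1)$; and for $f=0$ the three Newton polygons $\sigma_4$, $\tau:=((1/3)^3,(1/2)^2,(2/3)^3)$ and $\nu:=((1/4)^4,(3/4)^4)$. Using Oort's dimension formula for Newton strata I would record $\mathrm{codim}(\mathcal{A}_4[\xi],\mathcal{A}_4)=:c_\xi$: one has $c_\xi=4-f$ for the generic Newton polygon of each $p$-rank (so $c_\rho=3$ and $c_\nu=4$, the latter being the generic $p$-rank-$0$ stratum), together with $c_{(0,(1/2)^6,1)}=4$, $c_\tau=5$ and $c_{\sigma_4}=6$.

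Next I would collect the building blocks. Supersingular and ordinary elliptic curves exist in every characteristic. The extra input is that the Newton polygon $((1/3)^3,(2/3)^3)$ occurs on $\mathcal{M}_3$ in every characteristic: since $\dim\mathcal{M}_3=\dim\mathcal{A}_3$, the Torelli image is open in $\mathcal{A}_3$ with complement the locus of decomposable principally polarized threefolds, which has codimension $\geq 2$; the Newton stratum $\mathcal{A}_3[((1/3)^3,(2/3)^3)]$ is non-empty and, being the generic $p$-rank-$0$ stratum, has dimension $3$, and no principally polarized threefold with this Newton polygon decomposes, so every component of this stratum meets the Torelli locus. Hence there is a smooth genus-$3$ curve with Newton polygon $((1/3)^3,(2/3)^3)$, and $\dim\mathcal{M}_3[((1/3)^3,(2/3)^3)]=3$.

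For the seven Newton polygons $\xi$ other than $\nu$ I would then argue verbatim as in Theorem~\ref{T4supersingular}. Each such $\xi$ is a direct sum of the indecomposable symmetric Newton polygons $(0,1)$, $\sigma_1$ and $((1/3)^3,(2/3)^3)$; clutching the corresponding curves (ordinary elliptic curves, supersingular elliptic curves, and the genus-$3$ curve above) along a tree at ordinary double points produces a stable curve $C_\xi$ of compact type of genus $4$ whose Jacobian has Newton polygon exactly $\xi$, so $\mathrm{Jac}(C_\xi)\in\mathcal{A}_4[\xi]\cap T_4$. Since $\mathcal{A}_4$ is a smooth stack, this intersection has a component through $\mathrm{Jac}(C_\xi)$ of dimension $\geq 10-c_\xi-1=9-c_\xi$. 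The locus $\delta_\xi\subseteq\mathcal{A}_4[\xi]\cap T_4$ of Jacobians of \emph{singular} stable curves of compact type with Newton polygon $\xi$ is a finite union of products of lower-genus moduli of curves with prescribed Newton polygon, and a short computation (using $\dim\mathcal{M}_1[\sigma_1]=0$, $\dim\mathcal{M}_2[\sigma_2]=1$, $\dim\mathcal{M}_3[((1/3)^3,(2/3)^3)]=3$, $\dim\mathcal{M}_3[\sigma_3]\leq 2$, and purity of the $p$-rank stratification of $\mathcal{M}_g$) gives $\dim\delta_\xi<9-c_\xi$ in each of the seven cases — for example $\dim\delta_\rho\leq 4<6$ and $\dim\delta_\tau\leq 3<4$. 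As in Theorem~\ref{T4supersingular}, the generic geometric point of that component is then the Jacobian of a smooth genus-$4$ curve with Newton polygon exactly $\xi$, so $\mathcal{M}_4[\xi]$ is non-empty; taking $\xi=\sigma_4$ reproves Theorem~\ref{T4supersingular}.

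The main obstacle is the remaining Newton polygon $\nu=((1/4)^4,(3/4)^4)$, which is \emph{indecomposable}: the Jacobian of any singular stable curve of compact type is a non-trivial product, so no such curve has Newton polygon $\nu$, and the degeneration above only yields points of $\overline{\mathcal{A}_4[\nu]}\cap T_4$ lying in the more special strata $\mathcal{A}_4[\tau]$ or $\mathcal{A}_4[\sigma_4]$. I would instead start from the supersingular genus-$4$ curve of Theorem~\ref{T4supersingular}, whose Jacobian lies in $\mathcal{A}_4[\sigma_4]\subseteq\overline{\mathcal{A}_4[\nu]}$; thus $\overline{\mathcal{A}_4[\nu]}\cap T_4$ is non-empty, and since $\overline{\mathcal{A}_4[\nu]}$ has pure codimension $c_\nu=4$, it has a component $W$ of dimension $\geq 10-4-1=5$. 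Because $\overline{\mathcal{A}_4[\nu]}=\mathcal{A}_4[\nu]\sqcup\mathcal{A}_4[\tau]\sqcup\mathcal{A}_4[\sigma_4]$, while $\dim\mathcal{A}_4[\sigma_4]=4<5$ and the decomposable loci inside $\mathcal{A}_4[\tau]$ and $\mathcal{A}_4[\sigma_4]$ have dimension $\leq 3$, the component $W$ meets $\mathcal{A}_4[\nu]$ unless $W\subseteq\overline{\mathcal{M}_4[\tau]}$; and once $\mathcal{A}_4[\nu]\cap T_4\neq\emptyset$, every point of it is the Jacobian of a smooth curve, again by indecomposability of $\nu$. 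The delicate point — and the step I expect to be hardest — is therefore to rule out $W\subseteq\overline{\mathcal{M}_4[\tau]}$, i.e.\ to prove $\dim\mathcal{M}_4[\tau]<5$; I would try to deduce this from purity of the $p$-rank stratification of $\mathcal{M}_4$ together with an upper bound on the dimensions of the supersingular and $\tau$-loci of $\mathcal{M}_4$.
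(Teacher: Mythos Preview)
Your treatment of the seven \emph{decomposable} Newton polygons is correct and is exactly the method the paper formalizes as Proposition~\ref{Pnunonempty}; the paper simply packages the $g=4$ case as a corollary of the more general Theorem~\ref{Tprank34} rather than checking each $\xi$ by hand.

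The genuine gap is at $\nu=\nu_4^0$, and you have identified it correctly: the intersection argument only places you in $\overline{\mathcal{A}_4[\nu]}\cap T_4=\mathcal{A}_4^0\cap T_4$, and you must rule out that a $5$-dimensional component $W$ has generic Newton polygon $\tau=\nu_3^0\oplus ss$. Your proposed fix---purity plus an upper bound on $\dim\mathcal{M}_4[\tau]$---is circular: purity of the Newton stratification only tells you that if the Newton polygon jumps it does so in codimension $1$, which does \emph{not} prevent $\mathcal{M}_4[\tau]$ from being open and dense in a component of $\mathcal{M}_4^0$; and bounding $\dim\mathcal{M}_4[\tau]$ above by $4$ is precisely the statement you are trying to prove. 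Indeed $\dim\mathcal{A}_4[\tau]=5$, so the Torelli embedding gives no nontrivial upper bound here, and the paper itself (in the proof of Theorem~\ref{Tprank34}(2)) explicitly leaves open whether components of $\mathcal{M}_g[\nu_3^0\oplus ss]$ can have codimension $4$ rather than $5$.

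The paper does not attempt this step at all. It handles $\nu_4^0$ as external input: Lemma~\ref{Lrightdim} for $c=4$ cites \cite[Corollary~6.5]{priesCurrent}, and Corollary~\ref{Capplication} cites \cite[Lemma~5.3]{AP:gen}, both of which establish directly (by other techniques) that $\nu_4^0$ occurs on $\mathcal{M}_4$. So to close your argument you should invoke that result for the single indecomposable case, rather than try to extract it from the degeneration method---which, as you observed, cannot produce any compact-type boundary point with Newton polygon $\nu_4^0$.
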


\begin{remark}
The Newton polygon stratification of $\mathcal{M}_4$ is still not completely understood.
In particular, 
one would like to know whether all irreducible components of $\mathcal{M}_4[\sigma_4]$ have dimension $3$;
(apriori, $\mathcal{M}_4[\sigma_4]$ might have an irreducible component of dimension $4$).
In \cite[Corollary~4.4]{harashitarims}, Harashita proves that an irreducible 
component of $\mathcal{M}_4[\sigma_4]$ has dimension $3$ if
it contains a point representing a superspecial non-hyperelliptic curve.
\end{remark}

More generally, the method in Section~\ref{Sgeneralmethod} yields new applications
for curves of every genus $g \geq 4$, 
which are described briefly here.  See Section~\ref{Snotation} for more complete notation and background.
The \emph{$p$-rank} of a curve defined over an algebraically closed field of characteristic $p$
is the integer $f$ such that $p^f$ is the number of $p$-torsion points on its Jacobian; it equals the number
of slopes of $0$ in the Newton polygon.

The first application is a generalization of Corollary~\ref{Cg4complete}.
Realizing three new Newton polygons on $\mathcal{M}_g$, for each $g > 4$, 
yields the following result.

\begin{theorem} See Theorem~\ref{Tprank34}.  For $g \geq 4$, 
every symmetric Newton polygon in dimension $g$ with $p$-rank $f \geq g-4$ occurs on $\mathcal{M}_g$,
in every prime characteristic $p$.
\end{theorem}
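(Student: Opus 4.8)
The plan is to prove the theorem by induction on $g$, running the method of Section~\ref{Sgeneralmethod} one genus at a time; the base case $g=4$ is Corollary~\ref{Cg4complete}. Write $\xi = \xi_0 \oplus \mathrm{ord}_f$, where $\mathrm{ord}_f$ is the ordinary Newton polygon of height $2f$ and $\xi_0$ collects the slopes of $\xi$ lying in $(0,1)$. Since $\xi$ is symmetric with $p$-rank $f$, the polygon $\xi_0$ is symmetric of dimension $g_0:=g-f$ and has $p$-rank $0$, and the hypothesis $f\ge g-4$ says exactly that $g_0\le 4$. If $g_0=0$ then $\xi$ is ordinary and is realized by the generic curve, so assume $1\le g_0\le 4$; there are then only finitely many possibilities for $\xi_0$ (in dimensions $1$ and $2$ only $\sigma_1$ and $\sigma_2$; in dimension $3$ either $\sigma_3$ or the polygon with slopes $1/3,2/3$; in dimension $4$ either $\sigma_4$, the polygon with slopes $1/4,3/4$, or the orthogonal sum of the slope-$\{1/3,2/3\}$ polygon with $\sigma_1$).

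First I would realize $\xi$ by a singular curve. By Corollary~\ref{Cg4complete}, together with the classical description of $\mathcal{M}_{g_0}$ for $g_0\le 3$, there is a smooth curve $C_0$ over $\overline{\FF}_p$ of genus $g_0$ whose Jacobian has Newton polygon $\xi_0$. Clutching $C_0$ at ordinary double points to a chain of $f$ ordinary elliptic curves yields a stable curve $C$ of genus $g$ of compact type whose Jacobian $\mathrm{Jac}(C_0)\times E_1\times\cdots\times E_f$ is a principally polarized abelian variety of dimension $g$ with Newton polygon $\xi$. Hence $\mathcal{A}_g[\xi]\cap T_g$ is non-empty, where $T_g$ is the locus of Jacobians of stable curves of genus $g$. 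Since $\mathcal{A}_g$ is a smooth stack and
\[
  \mathrm{codim}(T_g\cap\mathcal{A}_g,\mathcal{A}_g)=\binom{g+1}{2}-(3g-3)=\tfrac12(g-2)(g-3),
\]
every irreducible component of $\mathcal{A}_g[\xi]\cap T_g$ has dimension at least
\[
  e(g,\xi):=(3g-3)-\mathrm{codim}(\mathcal{A}_g[\xi],\mathcal{A}_g),
\]
where I use the known (pure) dimension of the Newton polygon stratum $\mathcal{A}_g[\xi]$.

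The heart of the argument is to bound the ``bad locus'' $\delta\subseteq\mathcal{A}_g[\xi]\cap T_g$ consisting of points that represent Jacobians of stable curves which are \emph{not} smooth. Such a curve has compact type, so it degenerates across a separating node and its Jacobian decomposes as a product; therefore
\[
  \delta\;\subseteq\;\bigcup_{\substack{g'+g''=g\\ g',g''\ge 1}}\ \bigcup_{\xi'\oplus\xi''=\xi}\ \bigl(\mathcal{A}_{g'}[\xi']\cap T_{g'}\bigr)\times\bigl(\mathcal{A}_{g''}[\xi'']\cap T_{g''}\bigr).
\]
In each decomposition the $p$-ranks satisfy $f'+f''=f$ and $f''\le g''$, so $f'\ge g'-4$ and, symmetrically, $f''\ge g''-4$; thus the inductive hypothesis applies to both factors. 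It records the dimension of $\mathcal{A}_{g_i}[\xi_i]\cap T_{g_i}$: for $g_i\le 3$ this is $\dim\mathcal{A}_{g_i}[\xi_i]$ because $T_{g_i}$ is dense in $\mathcal{A}_{g_i}$, and for $g_i\ge 4$ it equals $e(g_i,\xi_i)$, except for the non-$\mu$-ordinary polygons $\sigma_4$ and the slope-$\{1/4,3/4\}$ polygon in dimension exactly $4$, where one has instead only the a priori bound $\dim\mathcal{A}_4[\xi_i]$. The inductive upper bound $\dim\mathcal{M}_{g_i}[\xi_i]\le e(g_i,\xi_i)$ (where it is claimed) follows from the known dimension of the $p$-rank strata of $\mathcal{M}_{g_i}$ together with de Jong--Oort purity, which makes the dimension drop by $1$ along each covering relation in the very short poset of $p$-rank-$0$ polygons of dimension $\le 4$. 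A short computation with these dimensions — using only that the codimension of $\mathcal{A}_{g_0}[\xi_0]$ inside the $p$-rank-$0$ locus of $\mathcal{A}_{g_0}$ is at most $2$ for $g_0\le 4$, while a $k$-fold degeneration costs $3(k-1)\ge 3$ — then gives $\dim\delta<e(g,\xi)$. Consequently no component of $\mathcal{A}_g[\xi]\cap T_g$ is contained in $\delta$, so the generic point of each component represents a smooth curve of genus $g$ with Newton polygon $\xi$; in particular such a curve exists, which completes the induction.

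I expect the dimension bookkeeping of the last paragraph to be the main obstacle, for two intertwined reasons. First, one must transport the precise value $e(g,\xi)$ — not just non-emptiness — through the induction, which forces one to control $\dim\mathcal{M}_{g_i}[\xi_i]$ from above even when the non-ordinary part of $\xi_i$ is not $\mu$-ordinary in its dimension, where the naive bound is too weak and purity must be invoked (and where, as the remark after Theorem~\ref{T4supersingular} shows, the exact dimension may itself be unknown, so the bound used must be robust). Second, the inequality $\dim\delta<e(g,\xi)$ must be checked over all orthogonal decompositions of $\xi$ and all admissible $\xi_0$; this is exactly the point where the hypothesis $g_0\le 4$ is used essentially, since the estimate degrades as soon as the non-ordinary part of $\xi$ is allowed to grow. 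Everything else — the construction of $C$, the codimension bound on the smooth stack $\mathcal{A}_g$, and the passage from the generic point of a component to a smooth curve — is the routine core of the method of Section~\ref{Sgeneralmethod}.
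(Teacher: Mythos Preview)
Your overall strategy is the paper's own method (Proposition~\ref{Pnunonempty}) run inductively in $g$, so the architecture is right. But there is a genuine circularity in your base case: you invoke Corollary~\ref{Cg4complete} for $g=4$, and in the paper Corollary~\ref{Cg4complete} is deduced \emph{from} Theorem~\ref{Tprank34} (its proof reads ``immediate from the final claim of Theorem~\ref{Tprank34}''). The paper does not assume the $g=4$ case; it establishes independent base cases for each of the three new families: $ss^3$ on $\mathcal{M}_3$ via \cite{oorthypsup}, $\nu_3^0\oplus ss$ on $\mathcal{M}_4$ via Proposition~\ref{Pslopes1overd}/Corollary~\ref{Capplication}(1), and $ss^4$ on $\mathcal{M}_4$ via Theorem~\ref{T4supersingular} (equivalently \cite{khs20}). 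Your argument would be repaired by substituting these inputs, or by running your own method once at $g=4$ against the known $g\le 3$ facts; as written, the base case is not established.

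On the inductive step, the paper's bookkeeping is sharper than your sketch. Rather than a single unified induction, it treats the three families $ord^{g-3}\oplus ss^3$, $ord^{g-4}\oplus\nu_3^0\oplus ss$, $ord^{g-4}\oplus ss^4$ separately, enumerates every partition $\xi=\xi_1\oplus\xi_2$, and checks \eqref{Eformula} via Lemma~\ref{Lcodim}. The upper bound $\dim\mathcal{M}_{g_i}^{ct}[\xi_i]\le e(\xi_i,\mathcal{M}_{g_i})$ that your induction needs does not follow from purity alone: for $\xi_i=ord^{g_i-4}\oplus ss^4$ one must first know that the generic Newton polygon on each component of $\mathcal{M}_{g_i}^{g_i-4}$ is $ord^{g_i-4}\oplus\nu_4^0$ or $ord^{g_i-4}\oplus\nu_3^0\oplus ss$ (the paper imports this from \cite[Theorem~4.2(b), Lemma~5.2]{AP:gen}), and even then the codimension of $\mathcal{M}_{g_i}[\xi_i]$ may be $5$ rather than $6$. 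The paper absorbs this slack partition by partition; your heuristic ``a $k$-fold degeneration costs $3(k-1)\ge 3$'' is the right idea but does not by itself handle these borderline cases, nor the $g_i=1$ adjustments where $e(ord,\mathcal{M}_{1,1})=1$ and $e(ss,\mathcal{M}_{1,1})=0$ replace $3g_i-3$.
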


The second application of the method in Section~\ref{Sgeneralmethod} is about Oort's conjecture for Newton polygons of curves.
For $i=1,2$, let $g_i$ be a positive integer, and let $\xi_i$ be a symmetric Newton polygon of height $2g_i$.
Let $g=g_1+g_2$, and 
let $\xi=\xi_1 \oplus \xi_2$ denote the symmetric Newton polygon of height $2g$ obtained 
by taking the union of the slopes of $\xi_1$ and $\xi_2$, 
letting the multiplicity of the slope $\lambda$ in $\xi$ be the sum of its multiplicities in $\xi_1$ and $\xi_2$.

\begin{conjecture}  \cite[Conjecture~8.5.7]{oort05}
Oort's conjecture for Newton polygons of curves states that:
If $\xi_i$ occurs on ${\mathcal M}_{g_i}$ for $i=1,2$, then
$\xi_1 \oplus \xi_2$ occurs on ${\mathcal M}_{g_1+g_2}$.
\end{conjecture}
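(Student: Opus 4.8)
The conjecture is not known in general; the method of Section~\ref{Sgeneralmethod} (the one behind Theorem~\ref{T4supersingular}) settles those pairs $(\xi_1,\xi_2)$ for which a single explicit dimension inequality holds, and I would proceed as follows. Fix $p$ and suppose $\xi_i$ occurs on $\mathcal{M}_{g_i}$, so there is a smooth curve $C_i/\overline{\FF}_p$ of genus $g_i$ whose Jacobian has Newton polygon $\xi_i$. First I would clutch $C_1$ and $C_2$ at a single point to form a stable curve $C$ of genus $g=g_1+g_2$ (stable since each $g_i\ge 1$); it has compact type, so $\mathrm{Jac}(C)\cong \mathrm{Jac}(C_1)\times \mathrm{Jac}(C_2)$ with the product principal polarization and Newton polygon $\xi_1\oplus\xi_2=\xi$. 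Hence $\mathcal{A}_g[\xi]\cap T_g$ is non-empty.

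Next I would run the dimension count from the proof of Theorem~\ref{T4supersingular}. Write $c(\xi)=\mathrm{codim}(\mathcal{A}_g[\xi],\mathcal{A}_g)$ for the (known, combinatorial) codimension of the Newton stratum, and recall $\mathrm{codim}(T_g,\mathcal{A}_g)=\tfrac{(g-2)(g-3)}{2}$. Since $\mathcal{A}_g$ is a smooth stack, by \cite[page~614]{V:stack} every irreducible component of the non-empty intersection $\mathcal{A}_g[\xi]\cap T_g$ has dimension at least
\[
\tfrac{g(g+1)}{2}-c(\xi)-\tfrac{(g-2)(g-3)}{2}=(3g-3)-c(\xi).
\]
Let $\delta\subseteq \mathcal{A}_g[\xi]\cap T_g$ be the locus of Jacobians of \emph{singular} stable curves of compact type. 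Such a Jacobian decomposes, with its product polarization, as $\prod_j\mathrm{Jac}(D_j)$ over the smooth components $D_j$; the product does not see the gluing points, so Torelli applied to each $D_j$ gives
\[
\dim\delta\le\max\ \sum_j\dim\mathcal{M}_{h_j}[\eta_j],
\]
the maximum over all partitions $g=\sum_j h_j$ into at least two parts and all decompositions $\xi=\bigoplus_j\eta_j$ with $\eta_j$ symmetric of height $2h_j$. If this maximum is \emph{strictly} below $(3g-3)-c(\xi)$, then no component of $\mathcal{A}_g[\xi]\cap T_g$ is contained in $\delta$, so a generic geometric point of each component represents a smooth curve; hence $\xi$ occurs on $\mathcal{M}_g$, and, via the Torelli morphism, $\dim\mathcal{M}_g[\xi]\ge (3g-3)-c(\xi)$.

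The crux, and the reason this falls short of a proof of the conjecture, is verifying $\dim\delta<(3g-3)-c(\xi)$. The decomposition $\xi=\xi_1\oplus\xi_2$ itself appears in the maximum, so one needs a sharp grip on Newton strata in smaller genus: the trivial bound $\dim\mathcal{M}_{h_j}[\eta_j]\le\dim\mathcal{A}_{h_j}[\eta_j]=\tfrac{h_j(h_j+1)}{2}-c(\eta_j)$ is usually too lossy, while the sharp bound $\dim\mathcal{M}_{h_j}[\eta_j]=(3h_j-3)-c(\eta_j)$ is essentially the conjecture in lower genus, so the argument is inductive in nature. Granting such bounds --- or, concretely, when the $h_j$ are small enough that the strata in $\mathcal{M}_{h_j}$ are already understood --- the required inequality reduces, roughly, to the combinatorial assertion $c(\xi)<3+\min_{\eta_1\oplus\eta_2=\xi}\bigl(c(\eta_1)+c(\eta_2)\bigr)$ (together with the bookkeeping fact $c(\eta_1\oplus\eta_2)\ge c(\eta_1)+c(\eta_2)$). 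This can be checked, for instance, when $\xi$ has $p$-rank $f\ge g-4$: split off the ordinary part of height $2f$ and analyze a core Newton polygon of height at most $8$, whose strata in $\mathcal{M}_{g'}$ with $g'\le 4$ are understood --- this is the mechanism behind Theorem~\ref{Tprank34} and Corollary~\ref{Cg4complete} --- or when one of $g_1,g_2$ is small, as in Theorem~\ref{T4supersingular}. Outside such ranges the inequality breaks down: already for $\sigma_5$ on $\mathcal{M}_5$, where $(3g-3)-c(\sigma_5)=3$, the decomposition $\sigma_5=\sigma_2\oplus\sigma_3$ forces $\dim\delta\ge\dim\mathcal{M}_2[\sigma_2]+\dim\mathcal{M}_3[\sigma_3]=3$, so the bound is not strict and the method is silent. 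Enlarging the range in which this inequality can be verified is the main obstacle.
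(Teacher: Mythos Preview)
The statement is a \emph{conjecture}; the paper does not prove it and explicitly treats it as open, establishing only the special cases in Proposition~\ref{Pslopes1overd} and Theorem~\ref{Tprank34}. You correctly recognize this and give an accurate account of the method of Proposition~\ref{Pnunonempty} and of its limitations, including the $\sigma_5$ obstruction, which matches the paper's own Example~\ref{Eg5prank0}(5). One minor imprecision: you say the sharp bound $\dim\mathcal{M}_{h_j}[\eta_j]=(3h_j-3)-c(\eta_j)$ ``is essentially the conjecture in lower genus,'' but the conjecture concerns non-emptiness, whereas what the method actually requires (hypothesis~(b)(ii) of Proposition~\ref{Pnunonempty}) is that \emph{every} irreducible component of the lower-genus stratum has the expected dimension --- a dimensional-transversality statement that neither implies nor is implied by the conjecture itself. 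Otherwise your summary of the mechanism, the reduction to the inequality $c(\xi)<c(\eta_1)+c(\eta_2)+3$ (Lemma~\ref{Lcodim}(3)), and the reason the argument is inductive are all in line with the paper.
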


In Proposition~\ref{Pslopes1overd}, for every $g \geq 4$, in every prime characteristic, 
I prove Oort's conjecture is true when $\xi_1$ and $\xi_2$ are chosen as follows:
let $d=g-1$ and let $\xi_1$ be the Newton polygon with slopes $1/d$ and $(d-1)/d$, each with multiplicity $d$; 
and let $\xi_2 = \sigma_1$ be the Newton polygon with two slopes of $1/2$.
This case of Oort's conjecture is more significant than earlier work from a geometric perspective, 
because the Newton polygon stratum for $\xi=\xi_1\oplus \xi_2$ has codimension $g+1$ in ${\mathcal A}_g$ in this case,
Remark~\ref{Rcodimension}.

Both Theorem~\ref{Tprank34} and Corollary~\ref{Cg4complete} rely on Proposition~\ref{Pslopes1overd}.
Also, using Proposition~\ref{Pslopes1overd}, it is
possible to realize new Newton polygons for curves of genus four to seven, Corollary~\ref{Capplication}.

In particular, when $g=5$, these applications show that four Newton polygons occur on $\mathcal{M}_5$, 
in every prime characteristic $p$, which were not previously known to occur.  In Example~\ref{Eg5prank0},
I include some examples when $g=5$ and the $p$-rank is $0$, to show some situations where 
the method in Section~\ref{Sgeneralmethod} does not apply.

\paragraph{Acknowledgment}
Pries was partially supported by NSF grant DMS-22-00418 and would like to thank Oort for the conversation that inspired this paper and thank Kedlaya and the anonymous referee for helpful suggestions.

\section{Notation and background} \label{Snotation}

Fix a prime number $p$.  Let $g$ be a positive integer.
Suppose $A$ is an abelian variety of dimension $g$ defined over an algebraically closed field $k$ of characteristic $p$.

\subsection{The $p$-rank and Newton polygon} \label{Snewtonpoly}

The \emph{$p$-rank} of $A$ is the integer $f$ such that $\#A[p](k)=p^f$.   
More generally, if $A$ is a semi-abelian variety, then its $p$-rank is 
$f_A = \mathrm{dim}_{\mathbb{F}_p}(\mathrm{Hom}(\mu_p, A))$
where $\mu_p$ is the kernel of Frobenius on the multiplicative group $\mathbb{G}_m$.

For each pair of non-negative relatively prime integers $c$ and $d$, 
fix a $p$-divisible group $G_{c,d}$ of codimension $c$, dimension $d$, and thus height $c + d$. 
The slope of $G_{c,d}$ is $\lambda = d/(c+d)$.

Suppose $A$ is a principally polarized abelian variety of dimension $g$.
By the classification result of Dieudonn\'e-Manin \cite{man63}, there is an isogeny of $p$-divisible groups
$A[p^\infty] \sim \oplus_{\lambda=d/(c+d)} G_{c,d}^{m_\lambda}$.
The \emph{Newton polygon} of $A$ is the data of the set $\{m_\lambda\}$.  
The Newton polygon of a curve is that of its Jacobian.

The Newton polygon $\xi$ of $A$ is drawn as a lower convex polygon, with endpoints $(0,0)$ and $(2g,g)$, 
whose slopes are the values of $\lambda$ occurring with multiplicity $(c+d)m_\lambda$.
It has integer breakpoints and is symmetric, meaning that $m_\lambda = m_{1-\lambda}$ for every slope $\lambda$.  
A \emph{Newton polygon in dimension $g$} means a symmetric Newton polygon that satisfies these conditions.

The abelian variety A is \emph{supersingular} if and only if $\lambda = 1/2$ is the only slope of its Newton polygon $\xi$.
The $p$-rank of $A$ is the multiplicity of the slope $0$ in $\xi$.

\begin{notation} \label{Nnud0}
Let $ord$ denote the ordinary Newton polygon in dimension $1$; it has slopes $0$ and $1$
and $p$-divisible group $G_{0,1} \oplus G_{1,0}$.
Let $ss$ denote the supersingular Newton polygon in dimension $1$; it has 
slopes $1/2$ and $p$-divisible group $G_{1,1}$. 

For $g \geq 2$, let $\sigma_g=ss^g$ denote the supersingular Newton polygon in dimension $g$; 
it has slope $1/2$ with multiplicity $2g$, and has $p$-divisible group $G_{1,1}^g$.
For $d \geq 3$, let $\nu_{d}^0$ be the Newton polygon with slopes $1/d$ and $(d-1)/d$, each with multiplicity 
$d$; it has $p$-divisible group $G_{1,d-1} \oplus G_{d-1,1}$.
\end{notation}

\begin{definition}
Let $\xi$ be a Newton polygon in dimension $g$.
A \emph{partition} of $\xi$ consists of 
a partition $g=g_1+g_2$ of $g$ into two positive integers, together with Newton polygons
$\xi_1, \xi_2$ in dimension $g_1, g_2$ such that $\xi = \xi_1 \oplus \xi_2$.
\end{definition}

\subsection{Curves of compact type} \label{Scompacttype}

Suppose $C$ is a stable curve of arithmetic genus $g$.
We say $C$ has \emph{compact type} if its dual graph is a tree.
By \cite[Section~9.2, Example~8]{BLR}, the Jacobian of $C$ is an abelian variety if and only if 
$C$ has compact type; if not, the Jacobian of $C$ is a semi-abelian variety and its toric rank is positive, 
which implies that its $p$-rank is positive.

\subsection{Moduli spaces}

Let ${\mathcal A}_g = \mathcal{A}_{g, \FF_p}$ be the moduli space
of principally polarized abelian varieties of dimension $g$ in characteristic $p$.  It has dimension $g(g+1)/2$.
Recall that ${\mathcal A}_g$ is a smooth stack.

Let $\mathcal{M}_g$ (resp.\ $\mathcal{M}_g^{ct}$, resp.\ $\overline{{\mathcal M}}_g$)
denote the moduli space of projective connected curves of genus $g$ that are smooth (resp.\ of compact type, resp.\ stable).
The dimension of $\mathcal{M}_g$ is $3g-3$ if $g \geq 2$.
Let $\tau: \mathcal{M}_g^{ct} \to {\mathcal{A}}_g$ be the Torelli morphism, which takes a curve 
to its Jacobian.
The \emph{closed Torelli locus} $T_g$ is the image of ${\mathcal{M}}_g^{ct}$ under $\tau$. 
The \emph{open Torelli locus} $T_g^\circ$ is the image of ${\mathcal{M}}_g$ under $\tau$.    

Note that the fibers of $\tau$ over $T_g - T_g^\circ$ are not finite. 
When building a singular curve of compact type from curves of smaller genus, it is necessary 
to choose points at which to clutch them together.  
This choice of points does not affect the Jacobian of such a curve.
The key idea in the proof of Proposition~\ref{Pnunonempty} is to work with intersections in $\mathcal{A}_g$
rather than $\overline{\mathcal{M}}_g$.

Let $\mathcal{A}_g^f$ (resp.\ $\mathcal{M}_g^f$) denote the $p$-rank $f$ locus of $\mathcal{A}_g$ 
(resp.\ $\mathcal{M}_g$).
In most cases, it is not known whether $\mathcal{M}_g^f$ is irreducible.
By \cite[Theorem~2.3]{FVdG}, the dimension of each irreducible component
of $\mathcal{M}_g^f$ is $2g-3 +f$.

\subsection{Newton polygon stratification} 

Let $\mathcal{M}_g[\xi]$ (resp.\ $\mathcal{A}_g[\xi]$) denote the stratum of $\mathcal{M}_g$ (resp.\ $\mathcal{A}_g$) 
whose points represent objects with Newton polygon $\xi$.
By the purity result of de Jong and Oort \cite[Theorem~4.1]{JO00}, if the Newton polygon changes on a family of 
abelian varieties, then it changes already in codimension $1$.

By \cite[Theorem~4.1]{oort01}, the codimension of $\mathcal{A}_g[\xi]$ in $\mathcal{A}_g$ 
equals the number of lattice points below $\xi$.
In particular, the supersingular locus $\mathcal{A}_g[\sigma_g]$ has dimension $\lfloor g^2/4 \rfloor$ 
\cite[Section 4.9]{lioort}.

\begin{definition}
If $\xi$ is a Newton polygon in dimension $g$, the \emph{$e$-dimension} of 
$\mathcal{M}_g[\xi]$ is:
\[e(\xi, \mathcal{M}_g) := \mathrm{max}\{0, 3g-3 - \mathrm{codim}(\mathcal{A}_g[\xi], \mathcal{A}_g)\},\]
unless $g=1$ and $\xi=ord$, in which case $e(ord, \mathcal{M}_{1,1})=1$.
\end{definition}

Here is the intuition behind this definition.  
If $T_g^{\circ}$ is dimensionally transverse to $\mathcal{A}_g[\xi]$, then $e(\xi, \mathcal{M}_g)$ 
will be the dimension of each irreducible component of $\mathcal{M}_g[\xi]$.
A priori, it is possible that $\mathcal{M}_g[\xi]$ is empty, or that an irreducible component of $\mathcal{M}_g[\xi]$ has dimension
greater than $e(\xi, \mathcal{M}_g)$, if $T_g^{\circ}$ is not dimensionally transverse to $\mathcal{A}_g[\xi]$.

\section{Intersection in the moduli space of abelian varieties} \label{Sgeneralmethod}

\subsection{A method for realizing Newton polygons}

The following method for realizing Newton polygons does not appear in the literature.

\begin{proposition} \label{Pnunonempty}
Suppose $\xi$ is a Newton polygon in dimension $g$.
Suppose that: \\
(a) there is a partition $g=g_1 + g_2$ and $\xi=\xi_1 \oplus \xi_2$ such that 
$\mathcal{M}^{ct}_{g_i}[\xi_i]$ is non-empty for $i=1,2$; \\ 
(b) for every partition $\xi=\xi_1 \oplus \xi_2$: either (i) $\mathcal{M}^{ct}_{g_i}[\xi_i]$ is empty for $i=1$ or $i=2$; or\\ 
(ii) each of the irreducible components of $\mathcal{M}^{ct}_{g_i}[\xi_i]$ has dimension $e(\xi_i, \mathcal{M}_{g_i})$ 
for $i=1,2$; and 
\begin{equation} \label{Eformula}
e(\xi_1, \mathcal{M}_{g_1})+ e(\xi_2, \mathcal{M}_{g_2}) < e(\xi, \mathcal{M}_g).
\end{equation}
Then $\xi$ occurs on $\mathcal{M}_g$.
\end{proposition}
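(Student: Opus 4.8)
The plan is to mimic the structure of the proof of Theorem~\ref{T4supersingular}, working entirely inside the smooth stack $\mathcal{A}_g$ and using the excess-intersection bound on codimensions. First I would use hypothesis (a) to produce a starting point. Choose smooth curves $C_i$ of genus $g_i$ with Newton polygon $\xi_i$ (or, more precisely, curves of compact type, since $\mathcal{M}^{ct}_{g_i}[\xi_i]$ is assumed non-empty), clutch $C_1$ and $C_2$ together at a point to form a stable curve $C$ of compact type and arithmetic genus $g$. By Section~\ref{Scompacttype}, $\mathrm{Jac}(C)$ is a principally polarized abelian variety of dimension $g$, decomposable with the product polarization, and its Newton polygon is $\xi_1 \oplus \xi_2 = \xi$. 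Hence $\mathrm{Jac}(C)$ is represented by a point in $\mathcal{A}_g[\xi] \cap T_g$, so this intersection is non-empty.

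Next I would bound the dimension of each irreducible component of $\mathcal{A}_g[\xi] \cap T_g$ from below. Since $\mathcal{A}_g$ is smooth, the codimension of a non-empty intersection of two substacks is at most the sum of their codimensions \cite[page 614]{V:stack}. We have $\mathrm{codim}(T_g \cap \mathcal{A}_g, \mathcal{A}_g) = g(g+1)/2 - (3g-3)$ (for $g \geq 2$), and $\mathrm{codim}(\mathcal{A}_g[\xi], \mathcal{A}_g)$ is given by Oort's formula. Subtracting from $\dim \mathcal{A}_g = g(g+1)/2$ and comparing, one finds that each irreducible component of $\mathcal{A}_g[\xi] \cap T_g$ has dimension at least $3g-3 - \mathrm{codim}(\mathcal{A}_g[\xi], \mathcal{A}_g)$, which is exactly $e(\xi, \mathcal{M}_g)$ whenever the latter is positive (and the cases where $e(\xi,\mathcal{M}_g)=0$ need a brief separate check, where the intersection point already lies in $T_g^\circ$ or can be handled directly). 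So every irreducible component of $\mathcal{A}_g[\xi] \cap T_g$ has dimension $\geq e(\xi, \mathcal{M}_g)$.

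The crux is to show that $\mathcal{A}_g[\xi] \cap T_g$ is not entirely contained in the non-smooth locus $\delta := (\mathcal{A}_g[\xi] \cap T_g) \setminus T_g^\circ$, i.e.\ that some irreducible component meets $T_g^\circ$. A point of $\delta$ represents the Jacobian of a stable curve $D$ of compact type that is singular; such a $D$ has a decomposition into components of smaller genus, so $\mathrm{Jac}(D)$ lies in the image of a clutching map and therefore in $\tau(\mathcal{M}^{ct}_{g_1}[\xi_1]) \oplus \tau(\mathcal{M}^{ct}_{g_2}[\xi_2])$ for some partition $\xi = \xi_1 \oplus \xi_2$ (iterating if $D$ has more than two components, so that the finest decomposition is a sum over $\geq 2$ pieces, which still refines to a two-block partition of $\xi$). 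For each such partition, hypothesis (b) applies: either one of the two strata $\mathcal{M}^{ct}_{g_i}[\xi_i]$ is empty — in which case that partition contributes nothing to $\delta$ — or each component of $\mathcal{M}^{ct}_{g_i}[\xi_i]$ has dimension exactly $e(\xi_i, \mathcal{M}_{g_i})$, so the corresponding piece of $\delta$ has dimension at most $e(\xi_1, \mathcal{M}_{g_1}) + e(\xi_2, \mathcal{M}_{g_2})$. (Here I would note that in the product, choosing the clutching points does not change the Jacobian, so there is no extra moduli — this is the point emphasized in the remark preceding Proposition~\ref{Pnunonempty}, and it is why one works in $\mathcal{A}_g$ rather than $\overline{\mathcal{M}}_g$.) Taking the maximum over the finitely many partitions and invoking \eqref{Eformula}, we get $\dim \delta < e(\xi, \mathcal{M}_g)$. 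Since every irreducible component of $\mathcal{A}_g[\xi] \cap T_g$ has dimension $\geq e(\xi, \mathcal{M}_g) > \dim \delta$, no component is contained in $\delta$, so a generic point of each component lies in $T_g^\circ$ and represents the Jacobian of a smooth curve of genus $g$ with Newton polygon $\xi$. Therefore $\xi$ occurs on $\mathcal{M}_g$.

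The main obstacle is the bookkeeping in the last step: one must be careful that every point of $\delta$ really is accounted for by a \emph{two-block} partition $\xi = \xi_1 \oplus \xi_2$ (a curve of compact type with many components decomposes as an iterated clutching, and its Jacobian is a product over the components, so one must argue that any such product Newton polygon refines a two-block partition and that the dimension count over the finer decomposition is dominated by the two-block bound), and that the $p$-rank / $e$-dimension edge cases (e.g.\ $g_i = 1$, $\xi_i = ord$, where $e(ord,\mathcal{M}_{1,1}) = 1$ by fiat) are consistent with the inequality \eqref{Eformula}. The dimension-transversality subtlety — that $\mathcal{A}_g[\xi] \cap T_g$ might a priori have a component of dimension strictly larger than $e(\xi,\mathcal{M}_g)$ — does not hurt us here, since we only need the lower bound on its dimension, but it is worth flagging that the conclusion is existence of the Newton polygon, not a determination of $\dim \mathcal{M}_g[\xi]$.
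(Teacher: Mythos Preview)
Your proposal is correct and follows essentially the same argument as the paper: produce a point of $\mathcal{A}_g[\xi]\cap T_g$ from hypothesis~(a), use the codimension bound on a smooth stack to get $\dim S \geq e(\xi,\mathcal{M}_g)$ for each component $S$, and then bound the decomposable locus $\delta$ partition-by-partition via hypothesis~(b) to force the generic point of $S$ into $T_g^\circ$. If anything, your treatment of the bookkeeping (why two-block partitions suffice, why clutching points contribute no extra moduli in $\mathcal{A}_g$) is more explicit than the paper's, which simply asserts that $\delta$ is the union of the images of $\mathcal{M}^{ct}_{g_1}[\xi_1]\times\mathcal{M}^{ct}_{g_2}[\xi_2]$ under $\tau$ and that ``$\tau$ is an embedding.''
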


\begin{proof}
By hypothesis (a), $\mathcal{A}_g[\xi] \cap T_g$ is non-empty, because it contains a point representing
$\mathrm{Jac}(C_1) \oplus \mathrm{Jac}(C_2)$, where $C_i$ is a compact-type curve of genus $g_i$ 
having Newton polygon $\xi_i$, for $i=1,2$.

Let $S$ be an irreducible component of $\mathcal{A}_g[\xi] \cap T_g$.
Since $\mathcal{A}_g$ is a smooth stack, the codimension of a non-empty intersection of two substacks is at
most the sum of their codimensions  \cite[page 614]{V:stack}. 
This implies that $\mathrm{codim}(S, T_g) \leq \mathrm{codim}(\mathcal{A}_g[\xi], \mathcal{A}_g)$.
This is equivalent to $\mathrm{dim}(S) \geq e(\xi, \mathcal{M}_g)$.

Let $\delta$ denote the locus in $S$
whose points represent the Jacobian of a stable curve $D$ that is not smooth.
Since $S \subset \mathcal{A}_g$, the curve $D$ has compact type.
By \cite[Section~9.2, Example~8]{BLR}, the Jacobian $J$ of $D$ is of the form 
$X_1 \oplus X_2$, where $X_i$ is the Jacobian of a curve of genus 
$g_i$ for some partition $g=g_1+g_2$.  Also the Newton polygon of $J$ is $\xi_1 \oplus \xi_2$ 
for some partition $\xi=\xi_1\oplus \xi_2$, with $\xi_i$ the Newton polygon of $X_i$ for $i=1,2$.

Thus $\delta$ is the union, for all partitions $g=g_1+g_2$ and $\xi = \xi_1 \oplus \xi_2$,
of the image, under the Torelli morphism $\tau$, of $\mathcal{M}^{ct}_{g_1}[\xi_1] \times \mathcal{M}^{ct}_{g_2}[\xi_2]$.
For each partition, by hypothesis (b), $\mathcal{M}^{ct}_{g_1}[\xi_1] \times \mathcal{M}^{ct}_{g_2}[\xi_2]$ is 
either empty, or it has dimension $e(\xi_1, \mathcal{M}_{g_1})+e(\xi_2, \mathcal{M}_{g_2})$, which
is strictly less than $e(\xi, \mathcal{M}_g)$ by hypothesis.
Since $\tau$ is an embedding, $\mathrm{dim}(\delta) < e(\xi, \mathcal{M}_g)$.
So the generic geometric point of $S$ represents 
the Jacobian of a curve of genus $g$ that has Newton polygon $\xi$ and that is smooth.
\end{proof}

Theorem~\ref{T4supersingular} provides one illustration of Proposition~\ref{Pnunonempty}.
One can also use Proposition~\ref{Pnunonempty} to quickly verify that there exists a smooth supersingular curve of genus $2$
(resp.\ genus $3$).

It is sometimes convenient to re-express the condition in \eqref{Eformula} in terms of the codimensions of the strata.
Suppose $g=g_0 \geq 3$ and $g_0=g_1+g_2$, $\xi_0=\xi_1 \oplus \xi_2$ is a partition.
For $i=0,1,2$, let $c_i = \mathrm{codim}(\mathcal{A}_{g_i}[\xi], \mathcal{A}_{g_i})$.

\begin{lemma} \label{Lcodim}
Suppose $g_0 \geq 3$.
\begin{enumerate}
\item If $\xi_1 = ord^{g_1}$, then \eqref{Eformula} is satisfied.
\item Suppose $g_1=1$ and $\xi_1 = ss$.  Suppose $c_i \leq 3g_i-3$ for $i=0,2$. 
Then \eqref{Eformula} is equivalent to $c_0 < c_2 + 3$.
\item Suppose $c_i \leq 3g_i-3$ for $i=0,1,2$.
Then \eqref{Eformula} is equivalent to $c_0 < c_1 + c_2 + 3$.
\end{enumerate}
\end{lemma}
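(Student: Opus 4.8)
The plan is to unwind the definition of the $e$-dimension and reduce everything to bookkeeping with the codimensions $c_i$ and the dimensions $3g_i - 3$ of the relevant moduli spaces of curves. Recall that $e(\xi_i, \mathcal{M}_{g_i}) = \max\{0,\, 3g_i - 3 - c_i\}$ (with the harmless exception $g_i = 1$, $\xi_i = ord$). Since $g_0 \geq 3$ and $3g_0 - 3 = 3(g_1 + g_2) - 3 = (3g_1 - 3) + (3g_2 - 3) + 3$, inequality~\eqref{Eformula} should, under the stated hypotheses that the $\max$'s are computed by the ``generic'' branch, become $(3g_1 - 3 - c_1) + (3g_2 - 3 - c_2) < 3g_0 - 3 - c_0$, i.e.\ $-c_1 - c_2 < 3 - c_0$, i.e.\ $c_0 < c_1 + c_2 + 3$; that is part (3). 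The main point of the lemma is therefore part (3), and parts (1) and (2) are the cases where one of the summands $e(\xi_i, \mathcal{M}_{g_i})$ must be handled separately because $g_i = 1$ or because $3g_i - 3 - c_i$ might be nonpositive, so that the naive algebra above has to be checked against the $\max\{0, \cdot\}$.

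First I would prove (3). Under the hypothesis $c_i \leq 3g_i - 3$ for $i = 0, 1, 2$, each $\max$ is attained by its second argument, so $e(\xi_i, \mathcal{M}_{g_i}) = 3g_i - 3 - c_i$ for $i = 0, 1, 2$ provided no $g_i$ equals $1$ with $\xi_i = ord$. One should dispose of that edge case: if $g_i = 1$ then $3g_i - 3 = 0$, and $\mathcal{A}_1[\xi_i]$ has codimension $c_i \in \{0, 1\}$; the only way $c_i \leq 3g_i - 3 = 0$ is $c_i = 0$, which forces $\xi_i = ord$, and then by the exceptional clause $e(ord, \mathcal{M}_{1,1}) = 1 \ne 0 = 3g_i - 3 - c_i$. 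So in part (3) I would either additionally assume $g_1, g_2 \geq 2$ or note that if some $g_i = 1$ it falls under part (1) or (2); granting $g_1, g_2 \geq 2$, substitute the three equalities into \eqref{Eformula}, use $3g_0 - 3 = (3g_1 - 3) + (3g_2 - 3) + 3$, and cancel to get the equivalence with $c_0 < c_1 + c_2 + 3$.

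Next, part (1): if $\xi_1 = ord^{g_1}$ then $\mathcal{A}_{g_1}[ord^{g_1}]$ is the ordinary locus, which is open and dense in $\mathcal{A}_{g_1}$, so $c_1 = 0$ and $e(\xi_1, \mathcal{M}_{g_1}) = 3g_1 - 3$ (for $g_1 \geq 2$; for $g_1 = 1$, $e(ord, \mathcal{M}_{1,1}) = 1 = 3g_1 - 3 + 1$, and one checks this only helps). For $\xi_2$ we always have $c_2 \geq 0$, hence $e(\xi_2, \mathcal{M}_{g_2}) \leq 3g_2 - 3$, and for $\xi = \xi_1 \oplus \xi_2 = ord^{g_1} \oplus \xi_2$ one has $c_0 = c_2$ (adjoining ordinary slopes does not add lattice points below the polygon), so $e(\xi, \mathcal{M}_g) = 3g - 3 - c_2 \geq 3g - 3 - (3g_2 - 3) = 3g_1 \geq 3g_1 - 3 + 3 > e(\xi_1, \mathcal{M}_{g_1}) + e(\xi_2, \mathcal{M}_{g_2})$, giving \eqref{Eformula} unconditionally (a small case check for $g_1 = 1$ using the exceptional value, and for $e(\xi, \mathcal{M}_g)$ hitting the $\max$ at $0$, which cannot happen here since it is $\geq 3g_1 > 0$). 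For part (2): here $g_1 = 1$, $\xi_1 = ss$, so $c_1 = 1$ and $e(ss, \mathcal{M}_{1,1}) = \max\{0, 0 - 1\} = 0$; with $c_0, c_2 \leq 3g_i - 3$ the terms $e(\xi_0, \mathcal{M}_{g_0}) = 3g_0 - 3 - c_0$ and $e(\xi_2, \mathcal{M}_{g_2}) = 3g_2 - 3 - c_2$ are the generic ones, and \eqref{Eformula} reads $0 + (3g_2 - 3 - c_2) < 3g_0 - 3 - c_0$; using $g_0 = g_2 + 1$ so $3g_0 - 3 = 3g_2$ gives $3g_2 - 3 - c_2 < 3g_2 - c_0$, i.e.\ $c_0 < c_2 + 3$, as claimed.

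The only genuine subtlety — the ``main obstacle,'' such as it is — is the interaction with the $\max\{0, \cdot\}$ in the definition of $e$ and with the exceptional value $e(ord, \mathcal{M}_{1,1}) = 1$: one must confirm in each part that the hypotheses ($c_i \leq 3g_i - 3$, or $\xi_1 = ord^{g_1}$, or $g_1 = 1$ and $\xi_1 = ss$) force the $\max$ onto the branch used in the algebra, and that whenever a $g_i = 1$ arises it is handled by the correct clause. Beyond that the proof is pure arithmetic with the identity $3(g_1 + g_2) - 3 = (3g_1 - 3) + (3g_2 - 3) + 3$ and the additivity $c_0 = c_1 + c_2$ when slopes are simply concatenated (and $c_0 = c_2$ in the special case $\xi_1 = ord^{g_1}$, $c_1 = 0$).
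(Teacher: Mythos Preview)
Your approach is the same as the paper's: unwind the definition of $e(\xi_i,\mathcal{M}_{g_i})$, use $3g_0-3=(3g_1-3)+(3g_2-3)+3$, and reduce \eqref{Eformula} to arithmetic in the $c_i$. Parts (2) and (3) are carried out correctly and match the paper's ``direct computation.''

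There are, however, two slips worth fixing. First, your chain of inequalities for part (1) does not go through. You write
\[
e(\xi,\mathcal{M}_g)=3g-3-c_2 \;\geq\; 3g-3-(3g_2-3)=3g_1 \;>\; e(\xi_1,\mathcal{M}_{g_1})+e(\xi_2,\mathcal{M}_{g_2}),
\]
but the last step would require $e(\xi_2,\mathcal{M}_{g_2})<3$, which is false in general. The point (and what the paper's terse argument is really doing) is that $c_0=c_2$ forces a \emph{cancellation}: when both maxima are in the generic branch,
\[
e(\xi_0,\mathcal{M}_{g_0})-e(\xi_2,\mathcal{M}_{g_2})=(3g_0-3-c_2)-(3g_2-3-c_2)=3g_1,
\]
and then $e(\xi_1,\mathcal{M}_{g_1})\in\{3g_1-3,\,1\}<3g_1$ gives \eqref{Eformula}. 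Your bounding approach throws away exactly this cancellation.

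Second, the closing remark that ``$c_0=c_1+c_2$ when slopes are simply concatenated'' is false in general: for $\xi_1=\nu_3^0$ ($c_1=3$) and $\xi_2=ss$ ($c_2=1$) one has $\xi_0=\nu_3^0\oplus ss$ with $c_0=5\neq 4$ (see Remark~\ref{Rcodimension}). You do not actually use this identity anywhere in the argument, so it should simply be deleted; the lemma only re-expresses \eqref{Eformula} in terms of the $c_i$ and never asserts any additive relation among them.
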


\begin{proof} 
\begin{enumerate}
\item In this case, $e(\xi_1, \mathcal{M}_{g_1}) = 3g_1 -3$ if $g_1 \geq 2$ and 
$e(\xi_1, \mathcal{M}_{1,1}) = 1$ if $g_1 =1$.
Since $\xi_1$ is ordinary, $c_2 = c_0$. 
So \eqref{Eformula} is equivalent to 
$e(\xi_1, \mathcal{M}_{g_1}) + (3g_2-3) < 3g_0-3$, which is true because $g_1+g_2 = g_0$.
\item This is true by direct computation because $e(ss, \mathcal{M}_{1,1}) = 0$ and $g_0=g_2+1$.
\item Direct computation. 
\end{enumerate}
\end{proof}

Proposition~\ref{Pnunonempty}, together with Lemma~\ref{Lcodim}(1), may be helpful in the future to reduce 
questions about the Newton polygon stratification of $\mathcal{M}_g$ to the case of $p$-rank $0$.

\subsection{Some cases of Oort's conjecture}

Proposition~\ref{Pnunonempty} has some applications for Oort's conjecture for Newton polygons.
Recall from Notation~\ref{Nnud0} that $\nu_d^0$ is the Newton polygon in dimension $d$ with slopes $1/d$ and $(d-1)/d$; 
its $p$-divisible group is $G_{1,d-1} \oplus G_{d-1,1}$.

\begin{proposition} \label{Pslopes1overd}
Let $d \geq 3$.  For every prime characteristic $p$, 
Oort's conjecture is true for $\xi_1 = \nu_d^0$ and $\xi_2 = ss$;
thus, if there exists a smooth curve of genus $d$ with Newton polygon $\nu_d^0$,
then there exists a smooth curve of genus $g=d+1$ having Newton polygon 
$\nu_d^0 \oplus ss$.
\end{proposition}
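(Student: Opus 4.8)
The plan is to apply Proposition~\ref{Pnunonempty} directly to the Newton polygon $\xi = \nu_d^0 \oplus ss$ in dimension $g = d+1$. First I would verify hypothesis (a): if there exists a smooth curve of genus $d$ with Newton polygon $\nu_d^0$, then $\mathcal{M}_d^{ct}[\nu_d^0] \supseteq \mathcal{M}_d[\nu_d^0]$ is non-empty; and since there is a supersingular elliptic curve in every characteristic, $\mathcal{M}_1^{ct}[ss] = \mathcal{M}_{1,1}[ss]$ is non-empty. So the partition $g = d + 1$, $\xi = \nu_d^0 \oplus ss$ witnesses (a).

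The heart of the argument is hypothesis (b): I must run through \emph{every} partition $g = g_1 + g_2$ and $\xi = \xi_1 \oplus \xi_2$, and for each one either show one factor's locus is empty or verify the dimension/inequality conditions. Since $\xi = \nu_d^0 \oplus ss$ has slopes $1/d$ and $(d-1)/d$ each with multiplicity $d$, plus two slopes of $1/2$, the only way to split it is: either $g_1 = 1$ with $\xi_1 = ss$ and $\xi_2 = \nu_d^0$ (the main partition), or $g_1 = d$ with $\xi_1 = \nu_d^0$ and $\xi_2 = ss$ (the same partition), or splittings that separate the $1/d$ and $(d-1)/d$ slopes — but because $d \geq 3$ and these slopes are not $1/2$, the multiplicities $d$ of the slopes $1/d$ and $(d-1)/d$ must each stay together in a single factor (a symmetric Newton polygon containing slope $1/d$ must contain slope $(d-1)/d$ with equal multiplicity, and the breakpoint/integrality constraints force the whole block $\nu_d^0$ into one piece). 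So effectively the only partition to analyze is $\{ss, \nu_d^0\}$ with $g_1 = 1$, $g_2 = d$. For this partition, by Lemma~\ref{Lcodim}(2) the inequality \eqref{Eformula} is equivalent to $c_0 < c_2 + 3$, where $c_2 = \mathrm{codim}(\mathcal{A}_d[\nu_d^0], \mathcal{A}_d)$ and $c_0 = \mathrm{codim}(\mathcal{A}_g[\xi], \mathcal{A}_g)$; both codimensions are computed by Oort's lattice-point count \cite[Theorem~4.1]{oort01}. Adding the two supersingular slopes to $\nu_d^0$ adds exactly two lattice points below the polygon near the middle, so $c_0 = c_2 + 2 < c_2 + 3$, and the inequality holds. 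I would also need the dimension hypothesis in (b)(ii): that every irreducible component of $\mathcal{M}_1^{ct}[ss]$ has dimension $e(ss, \mathcal{M}_{1,1}) = 0$ (clear, it is a finite set of points) and that every irreducible component of $\mathcal{M}_d^{ct}[\nu_d^0]$ has dimension $e(\nu_d^0, \mathcal{M}_d)$ — the latter is where the genuine content lies.

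The main obstacle is precisely this last dimension requirement: hypothesis (b)(ii) demands not just that $\mathcal{M}_d^{ct}[\nu_d^0]$ be non-empty but that \emph{all} of its components have the expected dimension $e(\nu_d^0, \mathcal{M}_d) = \max\{0, 3d - 3 - c_2\}$, i.e.\ that $T_d^\circ$ is dimensionally transverse to $\mathcal{A}_d[\nu_d^0]$. A priori, $\mathcal{M}_d^{ct}[\nu_d^0]$ could have an excess-dimensional component. I would handle this by the same kind of argument used in Proposition~\ref{Pnunonempty} itself: the compact-type boundary $\delta_d$ of $\mathcal{M}_d^{ct}[\nu_d^0]$ parametrizes decomposable Jacobians, and one checks that $\dim(\delta_d) < e(\nu_d^0, \mathcal{M}_d)$ by running through the partitions of $\nu_d^0$, so the excess locus, if any, would have to lie in the smooth locus $\mathcal{M}_d[\nu_d^0]$ — but since an irreducible component of $\mathcal{M}_d^{ct}[\xi]$ always has dimension at least $e(\xi, \mathcal{M}_d)$ and at most something we can bound via purity \cite[Theorem~4.1]{JO00}, we get equality. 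In fact the cleanest route is to invoke the slope-$1/d$ structure: curves with Newton polygon $\nu_d^0$ have $p$-rank $0$ and $a$-number constraints that pin down the stratum dimension, or alternatively to note (by induction on $d$) that this proposition is one of several in the paper whose hypotheses are checked together, so I would state the dimension claim as following from the construction of the curve with Newton polygon $\nu_d^0$ and the de Jong--Oort purity theorem applied inside $\mathcal{M}_d$. Once (a) and (b) are in hand, Proposition~\ref{Pnunonempty} yields that $\xi = \nu_d^0 \oplus ss$ occurs on $\mathcal{M}_g$, which is exactly the conclusion.
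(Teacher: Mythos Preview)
Your overall architecture is the same as the paper's: apply Proposition~\ref{Pnunonempty}, observe that $\nu_d^0$ is indecomposable as a symmetric Newton polygon so the only partition of $\xi=\nu_d^0\oplus ss$ is $\{\nu_d^0,\, ss\}$, check hypothesis~(a) from the assumed smooth curve and a supersingular elliptic curve, and verify the inequality~\eqref{Eformula} (your computation $c_0=c_2+2<c_2+3$ via Lemma~\ref{Lcodim}(2) is correct and matches the paper's direct count $e(\nu_d^0,\mathcal{M}_d)+e(ss,\mathcal{M}_{1,1})=(2d-3)+0<2d-2=e(\xi,\mathcal{M}_g)$).

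The gap is in the part you yourself flag as ``the main obstacle'': showing that every irreducible component of $\mathcal{M}_d^{ct}[\nu_d^0]$ has the expected dimension $e(\nu_d^0,\mathcal{M}_d)=2d-3$. Your proposed fixes (an induction on $d$, $a$-number constraints, or purity \cite[Theorem~4.1]{JO00}) are not adequate as stated: purity gives only that Newton polygon jumps occur in codimension~$1$, which does not by itself bound $\dim\mathcal{M}_d[\nu_d^0]$ from above, and there is nothing to induct on since $\nu_d^0$ does not arise from $\nu_{d-1}^0$. The paper's argument here is short and uses two facts you have not invoked. First, $\nu_d^0$ is the \emph{generic} Newton polygon in the $p$-rank~$0$ locus of $\mathcal{A}_d$, so $\mathcal{M}_d[\nu_d^0]$ is open in $\mathcal{M}_d^0$; hence every component of $\mathcal{M}_d[\nu_d^0]$ is open and dense in a component of $\mathcal{M}_d^0$. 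Second, by Faber--van der Geer \cite[Theorem~2.3]{FVdG} every irreducible component of $\mathcal{M}_d^0$ has dimension exactly $2d-3$. Together these give the required equality. (The passage from $\mathcal{M}_d$ to $\mathcal{M}_d^{ct}$ is harmless: since $\nu_d^0$ is indecomposable, a compact-type curve with this Newton polygon must be smooth, so $\mathcal{M}_d^{ct}[\nu_d^0]=\mathcal{M}_d[\nu_d^0]$.) Once you insert this, your proof is complete and coincides with the paper's.
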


\begin{proof}
The Newton polygon $\xi_1=\nu_d^0$ has $p$-rank $0$.  
It is the most generic Newton polygon of $p$-rank $0$ in $\mathcal{A}_d$, and 
the $p$-rank $0$ locus has codimension $d$ in $\mathcal{A}_d$.
Thus $e(\xi_1, \mathcal{M}_d) = 2d-3$.

By hypothesis, $\mathcal{M}_d[\xi_1]$ is non-empty.
Each irreducible component of $\mathcal{M}_d[\xi_1]$ is open and dense in an irreducible component 
of the $p$-rank $0$ locus $\mathcal{M}_d^0$, which has dimension $2d-3$ by \cite[Theorem~2.3]{FVdG}.

Let $g=d+1$ and consider the Newton polygon $\xi= \xi_1 \oplus ss$.
Let $\xi_2 = ss$.  The only partition of $\xi$ is $\xi_1 \oplus \xi_2$, because $\xi_1$ is 
indecomposable as a symmetric Newton polygon.
Hypothesis (a) is satisfied for this partition because $\mathcal{M}_d[\xi_1]$ is non-empty by 
hypothesis and because there exists a supersingular elliptic curve.

Note that $\mathcal{A}_g[\xi]$ has codimension $1$ in the $p$-rank $0$ locus of $\mathcal{A}_g$.
So $e(\xi, \mathcal{M}_g) = 2g-4 = 2d-2$.
Then $e(\xi_1, \mathcal{M}_d) + e(\xi_2, \mathcal{M}_{1,1}) = 2d-3 + 0$, which is smaller than
$e(\xi, \mathcal{M}_g)$.  Thus hypothesis (b) of Proposition~\ref{Pnunonempty} is also satisfied and the result follows.
\end{proof}

Proposition~\ref{Pslopes1overd} is used in the next section
to prove the existence of smooth curves of genus $g$
having a Newton polygon whose stratum in $\mathcal{A}_g$ has codimension larger than $g$. 

\begin{remark} \label{Rcodimension}
By \cite[Theorem~6.4]{priesCurrent}, Oort's conjecture is true for $\xi_1 = \nu_d^0$ and $\xi_2=ord^e$ for any $e \geq 1$
and every prime characteristic $p$.
Here is the reason why Proposition~\ref{Pslopes1overd} is more significant than that result from a geometric perspective.
On $\mathcal{A}_d$, the Newton polygon stratum for $\xi_1=\nu_d^0$ has codimension $d$.
The Newton polygon $\xi = \xi_1 \oplus \xi_2 = \nu_d^0 \oplus ord^e$ is the most generic one with $p$-rank $e$; 
thus $\mathrm{codim}(\mathcal{A}_{d+e}[\xi], \mathcal{A}_{d+e}) = d$.
In other words, $\mathrm{codim}(\mathcal{A}_{d+e}[\xi], \mathcal{A}_{d+e}) =  
\mathrm{codim}(\mathcal{A}_{d}[\xi_1], \mathcal{A}_{d})$ 
in \cite[Theorem~6.4]{priesCurrent}.

In contrast, in Proposition~\ref{Pslopes1overd}, $\xi_2 = ss$.  
The Newton polygon $\xi=\nu_d^0 \oplus ss$ is the second most generic one with $p$-rank $0$;
thus $\mathrm{codim}(\mathcal{A}_{d+1}[\xi], \mathcal{A}_{d+1}) = d+2$, which is 
greater by two than $\mathrm{codim}(\mathcal{A}_d[\xi_1], \mathcal{A}_d)$.
When a Newton polygon stratum has larger codimension, it is more difficult to show that it intersects the open Torelli locus.
\end{remark}

\section{Applications} \label{Sapplication}

The goal of this section is to demonstrate some new Newton polygons that occur for Jacobians of smooth curves.

\subsection{Some new Newton polygons of codimension $g+1$}

\begin{corollary} \label{Capplication} In every prime characteristic $p$: 
\begin{enumerate}
\item There exists a smooth curve of genus $4$ with Newton polygon 
$\nu_3^0 \oplus ss$.
\item There exists a smooth curve of genus $5$ with Newton polygon 
$\nu_4^0 \oplus ss$.
\item If $p \equiv 3,4,5,9 \bmod 11$, then
there exists a smooth curve of genus $6$ with Newton polygon $\nu_5^0 \oplus ss$.
\item If $p \equiv 2,4 \bmod 7$, then
there exists a smooth curve of genus $7$ with Newton polygon $\nu_6^0 \oplus ss$.
\end{enumerate}
\end{corollary}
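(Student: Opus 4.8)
The plan is to deduce each part of Corollary~\ref{Capplication} directly from Proposition~\ref{Pslopes1overd} together with the existence of a smooth curve of genus $d$ realizing the Newton polygon $\nu_d^0$, for $d=3,4,5,6$ respectively. Since $\nu_d^0$ is the most generic $p$-rank $0$ Newton polygon in dimension $d$, Proposition~\ref{Pslopes1overd} reduces the problem to showing that $\mathcal{M}_d[\nu_d^0]$ is non-empty in the relevant characteristic. So the first step is, for each $d$, to exhibit a smooth curve of genus $d$ whose Jacobian has $p$-rank $0$ and Newton polygon $\nu_d^0 = G_{1,d-1}\oplus G_{d-1,1}$; then Proposition~\ref{Pslopes1overd} immediately produces a smooth curve of genus $d+1$ with Newton polygon $\nu_d^0 \oplus ss$, which is exactly the claimed statement.

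For the input curves, I would invoke known results about curves with large automorphism group whose Newton polygons are forced by group-theoretic constraints. For $d=3$: the hyperelliptic curve $y^2 = x^8 - 1$ (or a suitable quotient of the Hermitian/Fermat curve) has $p$-rank $0$ and the curve $y^2=x^7-x$ type examples are classical; more robustly, one can cite that $\mathcal{M}_3^0$ is non-empty and its generic point has Newton polygon $\nu_3^0$ (so $\mathcal{M}_3[\nu_3^0]\neq\emptyset$ for all $p$). For $d=4$: one can use a curve with an automorphism of order $5$, e.g.\ a cyclic cover of $\mathbb{P}^1$ branched at suitable points, for which the $\mu_5$-action splits the de Rham cohomology into eigenspaces whose slopes are governed by $p \bmod 5$; for all $p$ one arranges the slopes to be $1/4$ and $3/4$. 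For $d=5$ with $p\equiv 3,4,5,9 \bmod 11$ and $d=6$ with $p\equiv 2,4 \bmod 7$: here I would use the cyclic cover $y^{11}=x^a(x-1)^b$ (genus $5$ quotient) and $y^7 = x^a(x-1)^b$ (genus $6$ quotient) respectively, where the congruence conditions on $p$ are exactly the conditions under which the Shimura--Taniyama / Gross formula for the slopes of a cyclic cover yields the single non-trivial Newton polygon $\nu_d^0$; these are the known cases in the tables (e.g.\ from work of Li--Mantovan--Pries--Tang, or the computations surveyed in \cite{priesCurrent}) where a smooth $p$-rank $0$ curve of genus $d$ with all slopes $1/d,(d-1)/d$ is known to exist.

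With the input curves in hand, the rest is automatic: for each listed $d$ and each prime $p$ (in the stated congruence class when $d=5,6$), apply Proposition~\ref{Pslopes1overd} to $\xi_1=\nu_d^0$ and $\xi_2=ss$, using a supersingular elliptic curve over $\overline{\mathbb{F}}_p$ as the genus-$1$ factor, to conclude that $\nu_d^0\oplus ss$ occurs on $\mathcal{M}_{d+1}$. This yields parts (1)--(4) with $g=d+1\in\{4,5,6,7\}$.

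The main obstacle is entirely in the first step: verifying that a smooth curve of genus $d$ with Newton polygon exactly $\nu_d^0$ exists in the given characteristic. For $d=3,4$ this is unconditional (there is an abundance of $p$-rank $0$ curves and $\nu_d^0$ is generic in $\mathcal{M}_d^0$), but for $d=5,6$ no such curve is known for all $p$, which is precisely why parts (3) and (4) carry congruence hypotheses on $p$ — those are the residue classes for which the cyclic-cover computation of the Newton polygon is known to give $\nu_d^0$. I would state clearly that the dependence on $p\bmod 11$ (resp.\ $p\bmod 7$) is inherited from the known existence results for the input curves, and that removing it would require new progress on Newton polygons of curves of genus $5$ and $6$.
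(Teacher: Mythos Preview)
Your approach is essentially identical to the paper's: reduce each part to Proposition~\ref{Pslopes1overd} and then verify that $\mathcal{M}_d[\nu_d^0]$ is non-empty for $d=3,4,5,6$ in the stated characteristics. The paper's justifications for the input curves differ only in detail---for $d=3$ it uses that $T_3^\circ$ is dense in $\mathcal{A}_3$ and that $\nu_3^0$ is indecomposable, for $d=4$ it cites \cite[Lemma~5.3]{AP:gen} rather than a $\mu_5$-cover (your $\mu_5$ suggestion would not work uniformly in $p$, but your fallback that $\nu_4^0$ is generic in $\mathcal{M}_4^0$ is the right idea), and for $d=5,6$ it cites \cite{LMPT1} and \cite{LMPT2}, matching your references.
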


\begin{proof}
This is immediate from Proposition~\ref{Pslopes1overd} once the existence of a smooth curve 
of genus $d=g-1$ with Newton polygon $\nu_d^0$ (slopes $1/d$ and $(d-1)/d$) is verified.  
For $d=3$, this is true because $T_3^\circ$ is open and dense in $\mathcal{A}_3$ and $\nu_3^0$ 
is indecomposable as a symmetric Newton polygon.
For $d=4$, this is verified in \cite[Lemma~5.3]{AP:gen}.
For $d=5$, when $p \equiv 3,4,5,9 \bmod 11$, this is verified in \cite[Theorem~1.2]{LMPT1}.
For $d=6$, when $p \equiv 2,4 \bmod 7$, this is verified in 
\cite[Theorem~7.4]{LMPT2}.
\end{proof}

\subsection{Some new Newton polygons for arbitrary $g$}

Suppose $g \geq 4$.
In this section, I realize three new Newton polygons on $\mathcal{M}_g$ for each $g \geq 5$ and two new Newton polygons 
on $\mathcal{M}_4$, for every prime characteristic $p$: 
Some consequences of this are: 

Theorem~\ref{Tprank34}: every Newton polygon having $p$-rank $f \geq g-4$ occurs on $\mathcal{M}_g$. 

Corollary~\ref{Cg4complete}:
every Newton polygon in dimension $g=4$ occurs on $\mathcal{M}_4$.

Corollary~\ref{Cg5completeprankpos}:
every Newton polygon in dimension $g=5$ with $p$-rank $f >0$ occurs on $\mathcal{M}_5$.

Suppose $\xi$ is a Newton polygon in dimension $g$.
The following lemma is useful when the Newton polygon stratum for $\xi$ has small codimension in $\mathcal{A}_g$.

\begin{lemma} \label{Lrightdim}
Let $c=\mathrm{codim}(\mathcal{A}_g[\xi], \mathcal{A}_g)$.
If $c \leq 3$, then $\xi$ occurs on $\mathcal{M}_g$ and every irreducible component of $\mathcal{M}_g[\xi]$ has 
codimension $c$ in $\mathcal{M}_g$.
If $c = 4$, then the same is true for $\xi=ord^{g-4} \oplus \nu_4^0$.
\end{lemma}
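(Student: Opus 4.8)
The plan is to induct on $g$ and peel off ordinary elliptic factors, using Proposition~\ref{Pnunonempty} together with Lemma~\ref{Lcodim}(1) to reduce to a small list of base cases that are then handled by the results already quoted in the excerpt. First I would treat the case $c \leq 3$. If $\xi = \mathit{ord}^g$, then $\xi$ is the generic Newton polygon and $T_g^\circ$ is open dense in $\mathcal{A}_g$, so $\xi$ certainly occurs on $\mathcal{M}_g$ with $\mathcal{M}_g[\xi] = \mathcal{M}_g$; so assume $\xi \neq \mathit{ord}^g$, i.e.\ the $p$-rank $f$ of $\xi$ satisfies $f < g$. I would write $\xi = \mathit{ord}^f \oplus \xi'$ where $\xi'$ is a Newton polygon in dimension $g' = g - f$ with $p$-rank $0$; note $\mathrm{codim}(\mathcal{A}_{g'}[\xi'], \mathcal{A}_{g'}) = c$ as well, since adding ordinary factors does not change the codimension. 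The cases $c \in \{1,2,3\}$ force $g'$ to be small: the $p$-rank $0$ locus of $\mathcal{A}_{g'}$ has codimension $g'$, so any $p$-rank $0$ Newton polygon $\xi'$ in dimension $g'$ has $\mathrm{codim}(\mathcal{A}_{g'}[\xi'], \mathcal{A}_{g'}) \geq g'$, hence $g' \leq c \leq 3$. For $g' \in \{1,2,3\}$ the Torelli locus $T_{g'}^\circ$ is open and dense in $\mathcal{A}_{g'}$ (for $g' \le 3$), and every $p$-rank $0$ Newton polygon in dimension $\le 3$ is indecomposable (the only partitions would involve $\mathit{ord}$ factors, contradicting $p$-rank $0$), so each such $\xi'$ occurs on $\mathcal{M}_{g'}$ with $\mathcal{M}_{g'}[\xi']$ of the expected dimension $2g'-3$ by \cite[Theorem~2.3]{FVdG} (or directly, since the $\xi'$-stratum of $\mathcal{A}_{g'}$ is cut transversally by the dense $T_{g'}^\circ$). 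I would also need $g' = 0$, i.e.\ $\xi = \mathit{ord}^g$, already handled.

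Next I would bootstrap from dimension $g'$ back up to dimension $g$ by repeatedly applying Proposition~\ref{Pnunonempty} with the trivial partition $g_1 = 1$, $\xi_1 = \mathit{ord}$, $g_2 = g - 1$, $\xi_2 = \mathit{ord}^{f-1} \oplus \xi'$, and invoking Lemma~\ref{Lcodim}(1) to verify inequality~\eqref{Eformula}. Hypothesis (a) of Proposition~\ref{Pnunonempty} holds because $\mathcal{M}_{g-1}^{ct}[\xi_2]$ is non-empty by the inductive hypothesis (a compact-type curve realizing $\xi_2$ exists — indeed a smooth one does) and an ordinary elliptic curve exists; hypothesis (b) is where one must check every partition of $\xi$. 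A partition of $\xi = \mathit{ord}^f \oplus \xi'$ either splits off some ordinary factors (so $\xi_i = \mathit{ord}^{g_i}$ for one $i$, and Lemma~\ref{Lcodim}(1) applies), or it splits $\xi'$ itself; but $\xi'$ has $p$-rank $0$ and, in the ranges $g' \le 3$, is indecomposable, so no non-trivial split of $\xi'$ occurs. Hence the only partitions are the "peel off ordinary factors" ones, for each of which either hypothesis (b)(i) holds or (b)(ii) holds with~\eqref{Eformula} verified by Lemma~\ref{Lcodim}(1); this also confirms that every irreducible component of $\mathcal{M}_g^{ct}[\xi]$, hence of $\mathcal{M}_g[\xi]$, has dimension $e(\xi, \mathcal{M}_g) = 3g - 3 - c$, i.e.\ codimension $c$ in $\mathcal{M}_g$.

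For the case $c = 4$ with $\xi = \mathit{ord}^{g-4} \oplus \nu_4^0$, I would proceed identically, with $g' = 4$ and $\xi' = \nu_4^0$. The base case now requires knowing that $\nu_4^0$ occurs on $\mathcal{M}_4$ with its stratum of the expected dimension $2\cdot 4 - 3 = 5$; this is exactly \cite[Lemma~5.3]{AP:gen} (as cited in the proof of Corollary~\ref{Capplication}), combined with \cite[Theorem~2.3]{FVdG} to identify the dimension. Since $\nu_4^0$ is indecomposable as a symmetric Newton polygon of $p$-rank $0$, the only partitions of $\xi = \mathit{ord}^{g-4}\oplus \nu_4^0$ again peel off ordinary factors, and Lemma~\ref{Lcodim}(1) handles \eqref{Eformula} in each. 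One subtlety: when $g = 4$ there is nothing to induct over and the claim is just the base case; for $g \ge 5$ the induction peels off one $\mathit{ord}$ at a time down to $\mathcal{M}_4[\nu_4^0]$.

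The main obstacle is not any single hard estimate but the bookkeeping in hypothesis (b) of Proposition~\ref{Pnunonempty}: one must be sure that the only partitions of $\xi$ are those obtained by moving ordinary blocks to one side, which rests on the indecomposability of the $p$-rank $0$ part $\xi'$ in the relevant small dimensions ($g' \le 4$). This indecomposability is immediate for $g' \le 3$ (any nontrivial symmetric split of a dimension $\le 3$, $p$-rank $0$ polygon would have to introduce slope $0$) and for $\nu_4^0$ it is explicitly the indecomposability used already in the paper. Once that is in hand, every application of Lemma~\ref{Lcodim}(1) is routine and the dimension count for the components of $\mathcal{M}_g[\xi]$ follows from the strict inequality in Proposition~\ref{Pnunonempty} together with the transversality bound $\mathrm{codim}(S, T_g) \le c$ established in its proof.
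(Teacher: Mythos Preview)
Your approach is quite different from the paper's. The paper does not use Proposition~\ref{Pnunonempty} here at all; instead it observes that for each $c\le 4$ the Newton polygon $\xi$ in question is the \emph{unique} (for $c\le 2$) or the \emph{generic} (for $c=3,4$) Newton polygon with $p$-rank $f=g-c$, so that $\mathcal{M}_g[\xi]$ is open in $\mathcal{M}_g^{f}$, and then cites \cite[Theorem~2.3]{FVdG} for the pure codimension of $\mathcal{M}_g^f$, together with \cite[Corollary~5.5]{AP:gen} (for $c=3$) and \cite[Corollary~6.5]{priesCurrent} (for $c=4$) to get non-emptiness and density. In the paper's logic, Lemma~\ref{Lrightdim} is an \emph{input} to later applications of Proposition~\ref{Pnunonempty}, not a consequence of it.

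Your inductive strategy has a concrete error: the claim that every $p$-rank~$0$ Newton polygon in dimension $\le 3$ is indecomposable is false. For $c=2$ one has $g'=2$ and $\xi'=ss^2$, which decomposes as $ss\oplus ss$; so $\xi=\mathit{ord}^{g-2}\oplus ss^2$ admits partitions of the form $\xi_1=\mathit{ord}^a\oplus ss$, $\xi_2=\mathit{ord}^{g-2-a}\oplus ss$ that are not ``peel off ordinary factors'' partitions. Your verification of hypothesis~(b) therefore omits these, and Lemma~\ref{Lcodim}(1) does not cover them. (They can be handled via Lemma~\ref{Lcodim}(2),(3) since $c_1=c_2=1$ and $2<1+1+3$, but you would also need the $c=1$ case already in hand to know the components of $\mathcal{M}^{ct}_{g_i}[\xi_i]$ have the right dimension.)

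There is a second, smaller gap: Proposition~\ref{Pnunonempty} and the bound $\mathrm{codim}(S,T_g)\le c$ give only $\dim\mathcal{M}_g[\xi]\ge 3g-3-c$, i.e.\ codimension at most $c$. To conclude codimension \emph{equal} to $c$ you still need the opposite inequality, which comes from the fact that $\mathcal{M}_g[\xi]$ is open in $\mathcal{M}_g^{g-c}$ (because $\xi$ is the generic Newton polygon with that $p$-rank) together with \cite[Theorem~2.3]{FVdG}. You invoke this for the base case but not in the inductive step, and your final sentence attributes the full dimension count to the transversality bound alone, which is not sufficient.
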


\begin{proof}
For $c=0$ (resp.\ $c=1$, resp.\ $c=2$), 
the only option for the Newton polygon $\xi$ is $ord^g$ (resp.\ $ord^{g-1} \oplus ss$, resp.\ $ord^{g-2} \oplus ss^2$);
and this is the only Newton polygon 
in dimension $g$ having $p$-rank $g$ (resp.\ $g-1$, resp.\ $g-2$).
So in these cases, $\xi$ occurs on $\mathcal{M}_g$ and every irreducible component
of $\mathcal{M}_g[\xi]$ has 
codimension $c$ in $\mathcal{M}_g$ by \cite[Theorem~2.3]{FVdG}.

For $c=3$, the only option for the Newton polygon $\xi$ is $ord^{g-3} \oplus \nu_3^0$.
By \cite[Corollary~5.5]{AP:gen}, this Newton polygon $\xi$ occurs on $\mathcal{M}_g$ and
every irreducible component of $\mathcal{M}_g[\xi]$ is open and dense in an irreducible component of $\mathcal{M}_g^{g-3}$.
By \cite[Theorem~2.3]{FVdG}, every irreducible component of $\mathcal{M}_g^{g-3}$ has codimension 
$3$ in $\mathcal{M}_g$. 
Furthermore, $\xi$ is the generic Newton polygon on every irreducible component of $\mathcal{M}_g^{g-3}$ by \cite[Corollary~5.5]{AP:gen}.

If $c=4$ and $\xi=ord^{g-4} \oplus \nu_4^0$, then by \cite[Corollary~6.5]{priesCurrent}, $\xi$ occurs on 
$\mathcal{M}_g$ and every irreducible component of $\mathcal{M}_g[\xi]$ is open and dense in an irreducible component of $\mathcal{M}_g^{g-4}$.
By \cite[Theorem~2.3]{FVdG}, every irreducible component of $\mathcal{M}_g^{g-4}$ 
has codimension $4$ in $\mathcal{M}_g$.
\end{proof}

Here is the main result of the paper.

\begin{theorem} \label{Tprank34}
Let $g \geq 4$.  
These symmetric Newton polygons occur on $\mathcal{M}_g$, in every prime characteristic $p$:
\begin{enumerate}
\item $\xi=ord^{g-3} \oplus ss^3$;
\item $\xi=ord^{g-4} \oplus \nu_3^0 \oplus ss$; and
\item $\xi= ord^{g-4} \oplus ss^4$.
\end{enumerate}
As a result, every Newton polygon with $p$-rank $f \geq g-4$ occurs on $\mathcal{M}_g$.
\end{theorem}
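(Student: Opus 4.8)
The plan is to first establish that the three Newton polygons in (1)--(3) occur on $\mathcal{M}_g$ for every $g\geq 4$, by induction on $g$ using Proposition~\ref{Pnunonempty}, and then to deduce the $p$-rank statement by enumerating symmetric Newton polygons of $p$-rank $0$ of small dimension. For the base case $g=4$: item (3), $\xi=ss^4=\sigma_4$, is Theorem~\ref{T4supersingular}; item (2), $\xi=\nu_3^0\oplus ss$, is Corollary~\ref{Capplication}(1); and for item (1), $\xi=ord\oplus ss^3=ord\oplus\sigma_3$, I would apply Proposition~\ref{Pnunonempty} to the partition $4=1+3$ with $\xi_1=ord$ and $\xi_2=\sigma_3$, using that a smooth supersingular curve of genus~$3$ exists (remarked just after Proposition~\ref{Pnunonempty}) and that $\mathcal{A}_3[\sigma_3]$ has dimension~$2$. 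The three partitions of $ord\oplus\sigma_3$, namely $\{ord,\sigma_3\}$, $\{ss,ord\oplus ss^2\}$, $\{ord\oplus ss,ss^2\}$, are then checked using Lemma~\ref{Lcodim}(1),(2) for the formal inequality \eqref{Eformula} and Lemma~\ref{Lrightdim} for the factors of codimension $\leq 2$.

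For the inductive step $g\geq 5$, I would peel off an ordinary elliptic factor: apply Proposition~\ref{Pnunonempty} to $g=1+(g-1)$ with $\xi_1=ord$ and $\xi_2$ the genus-$(g-1)$ analogue of the polygon in question, which occurs on $\mathcal{M}_{g-1}$ by induction, so hypothesis~(a) is immediate. For hypothesis~(b) one runs through all partitions $\xi=\xi_1'\oplus\xi_2'$. Because slopes $1/2$ occur with even multiplicity in any symmetric summand and $\nu_3^0$ is indecomposable, each factor has the form $ord^{a_i}\oplus\eta_i$ with $\eta_i$ a symmetric Newton polygon of $p$-rank $0$ and dimension $\leq 4$; hence $\eta_i$ is one of $ss^k$ for $0\leq k\leq 4$ (with $ss^0$ trivial), $\nu_3^0$, $\nu_3^0\oplus ss$, $\nu_4^0$. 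If $\eta_i\in\{ss^0,ss,ss^2,\nu_3^0\}$ the factor has codimension $\leq 3$, or if $\eta_i=\nu_4^0$ it is of the form $ord^{a_i}\oplus\nu_4^0$, so in either case Lemma~\ref{Lrightdim} gives that $\mathcal{M}^{ct}_{g_i'}[\xi_i']$ has every component of the expected dimension $e(\xi_i',\mathcal{M}_{g_i'})$; if $\eta_i\in\{ss^3,\nu_3^0\oplus ss,ss^4\}$ the factor is exactly a polygon of type (1), (2) or (3) at genus $g_i'<g$ and the other factor is then forced by the divisibility constraints to be purely ordinary or of codimension $\leq 3$, so the inductive hypothesis together with the bound $\dim\tau(\mathcal{M}^{ct}_{g_i'}[\xi_i'])\leq \dim \mathcal{M}^{f_i}_{g_i'}=2g_i'-3+f_i$ from \cite{FVdG} controls it (and in fact the bound is strict, since such an $\eta_i$ is never the generic Newton polygon of that $p$-rank). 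Inserting these dimension bounds shows that the locus $\delta$ in the proof of Proposition~\ref{Pnunonempty} has dimension strictly less than $e(\xi,\mathcal{M}_g)$ for every partition, which finishes the induction.

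To deduce the last sentence: a symmetric Newton polygon $\xi$ of dimension $g$ and $p$-rank $f$ decomposes uniquely as $ord^f\oplus\eta$ with $\eta$ a symmetric Newton polygon of $p$-rank $0$ and dimension $g-f$; when $f\geq g-4$ we have $\dim\eta\leq 4$, so $\eta$ is one of the eight polygons $ss^k$ ($0\leq k\leq 4$), $\nu_3^0$, $\nu_3^0\oplus ss$, $\nu_4^0$. The five resulting polygons with $\eta\in\{ss^0,ss,ss^2,\nu_3^0,\nu_4^0\}$ occur on $\mathcal{M}_g$ by Lemma~\ref{Lrightdim}, and the remaining three, with $\eta\in\{ss^3,\nu_3^0\oplus ss,ss^4\}$, are precisely items (1), (2), (3).

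The step I expect to be the main obstacle is the dimension bookkeeping inside hypothesis~(b) of Proposition~\ref{Pnunonempty}, specifically bounding the contribution of the supersingular-type sub-factors $ord^{a}\oplus\sigma_3$, $ord^{a}\oplus\nu_3^0\oplus ss$, $ord^{a}\oplus\sigma_4$. These are exactly the polygons being realized, so the argument is inherently inductive, and moreover their precise dimension in $\mathcal{M}$ need not be known — for example it is open whether every component of $\mathcal{M}_4[\sigma_4]$ has dimension $3$ — so one cannot simply use $e(\cdot,\mathcal{M})$ as the bound. The way around this is that such a factor only ever appears in a partition paired with a complement that is purely ordinary or of codimension $\leq 3$, and that the factor itself lies in a $p$-rank stratum whose dimension is governed by \cite{FVdG} and on whose every component the generic Newton polygon is known to be the $\nu_4^0$-type, $\nu_3^0$-type, or ordinary one (by \cite{AP:gen}, \cite{priesCurrent}), which forces the factor's stratum to have strictly smaller dimension and leaves just enough room. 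Setting up the induction so that exactly this statement is propagated, and keeping track of the fact that $\tau$ is not finite on the boundary of $\mathcal{M}_g^{ct}$, so that one must bound $\tau$-images in $\mathcal{A}_g$ rather than dimensions in $\mathcal{M}_g^{ct}$, is where the real care lies; the rest is the partition-by-partition arithmetic sketched above.
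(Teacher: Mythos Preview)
Your approach mirrors the paper's: induction on $g$, with the inductive step carried by the argument of Proposition~\ref{Pnunonempty} and a partition-by-partition check, bounding the contribution of the type-(1),(2),(3) factors via the $p$-rank stratum dimension together with genericity results. One claim needs adjusting: you assert that on every component of $\mathcal{M}_{g'}^{g'-4}$ the generic Newton polygon is the $\nu_4^0$-type, so that $ord^a\oplus\nu_3^0\oplus ss$ is never generic and the $p$-rank bound is strict for type-(2) factors. This is not known --- by \cite{AP:gen} the generic polygon on such a component is either $ord^{g'-4}\oplus\nu_4^0$ or $ord^{g'-4}\oplus\nu_3^0\oplus ss$, and the paper explicitly flags this possibility in its treatment of item~(2). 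The argument nevertheless survives, because the non-strict bound $\dim\leq 2g_i'-3+f_i$ already suffices whenever a type-(2) factor appears (its complement is then purely ordinary, and the margin in \eqref{Eformula} absorbs the extra unit), whereas for types~(1) and~(3) --- where the strict bound is genuinely needed, e.g.\ in item~(3) with the partition $\xi_1=ss$, $\xi_2=ord^{g-4}\oplus ss^3$, or with $\xi_1=ord$, $\xi_2=ord^{g-5}\oplus ss^4$ --- strictness is indeed available from \cite{AP:gen}. The paper organizes the same bookkeeping slightly differently: it records at the outset of each item that the codimension of $\mathcal{M}_{g}[\xi]$, if nonempty, is exactly $4$ for item~(1) and lies in $\{4,5\}$ (resp.\ $\{5,6\}$) for item~(2) (resp.\ item~(3)), and then checks that the codimension inequalities of Lemma~\ref{Lcodim} hold with that slack.
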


\begin{proof}
The Newton polygons in parts (1)-(3) are the only ones with $p$-rank $f \geq g-4$ that are not covered in
Lemma~\ref{Lrightdim}.  So the final claim follows from the occurrence of these Newton polygons on $\mathcal{M}_g$.

Throughout this proof, Lemma~\ref{Lrightdim} will be used without comment to check the first part of hypothesis~(b)(ii).
Also, the hypothesis $c_i \leq 3g_i-3$ used in Lemma~\ref{Lcodim} 
is true in all cases below, except for the case $g_i=1$ and $\xi_i =ord$, which is handled separately.
The length of the proof is due to the number of partitions of $\xi$.
Write $g=g_0$, $\xi=\xi_0$, and $c=c_0$.

\begin{enumerate}
\item Let $\xi=ord^{g-3} \oplus ss^3$.  If $\xi$ occurs on $\mathcal{M}_g$, 
then its stratum must have codimension $1$ in $\mathcal{M}_g^{g-3}$ by purity and \cite[Corollary~5.5]{AP:gen}.
So if $\xi$ occurs on $\mathcal{M}_g$, then $\mathcal{M}_g[\xi]$ has codimension $c=4$ in $\mathcal{M}_g$.

The result now follows by induction on $g$.
For the base case $g=3$, the Newton polygon $ss^3$ occurs on $\mathcal{M}_3$; 
see e.g., \cite[Theorem~5.12(2)]{oorthypsup}.
Suppose now that $g \geq 4$ and that the result is true for all $3 \leq g' < g$.
The inductive step relies on Proposition~\ref{Pnunonempty}.
Hypothesis (a) is true for the partition $\xi_1=ord^{g-3}$ and $\xi_2 = ss^3$.   

For $1 \leq t \leq g-3$, consider the partition $\xi_1 = ord^t$ and $\xi_2 = ord^{g-3-t} \oplus ss^3$.
Then hypothesis (b) is satisfied by Lemma~\ref{Lcodim}(1) and the inductive hypothesis.

Consider the partition $\xi_1=ss$ and $\xi_2=ord^{g-3} \oplus ss^2$. 
Hypothesis (b) is true by Lemma~\ref{Lrightdim} and Lemma~\ref{Lcodim}(2), because $4=c < c_2 + 3 = 5$.

By symmetry, the only remaining partitions are $\xi_1 = ord^t \oplus ss$ and $\xi_2 = ord^{g-3-t} \oplus ss^2$, 
for some $1 \leq t \leq g-3$.  Then $c_1=1$ and $c_2=2$.
Hypothesis (b) is true by Lemma~\ref{Lrightdim} and Lemma~\ref{Lcodim}(3), because $4=c< c_1+ c_2 + 3 = 6$.

\item Let $\xi=ord^{g-4} \oplus \nu_3^0 \oplus ss$.  Then $c=5$.  
If $\xi$ occurs on $\mathcal{M}_g$, then its stratum is contained in $\mathcal{M}_g^{g-4}$, 
which has codimension $4$ in $\mathcal{M}_g$.
However, it is not known whether $ord^{g-4} \oplus \nu_4^0$ is 
the generic Newton polygon on every irreducible component of $\mathcal{M}_g^{g-4}$.
For this reason, if $\xi$ occurs on $\mathcal{M}_g$, then 
each irreducible component of $\mathcal{M}_g[\xi]$ has codimension either $5$ (dimensionally transverse intersection) 
or $4$ in $\mathcal{M}_g$.

The proof will be by induction on $g$.
For the base case $g=4$, then $\xi=\nu_3^0 \oplus ss$ which occurs on $\mathcal{M}_4$ by Corollary~\ref{Capplication}(1).
Suppose that $g \geq 5$ and the result is true for all $4 \leq g' < g$.

The proof follows the strategy of Proposition~\ref{Pnunonempty} with a few adjustments.
Hypothesis (a) is true for the partition $\xi_1 = ord^{g-4}$ and $\xi_2 = \nu_3^0 \oplus ss$.

For hypothesis (b), consider the partition $\xi_1 = ord^t$ and $\xi_2 = ord^{g-4-t} \oplus \nu_3^0 \oplus ss$ 
for $1 \leq t \leq g-4$.  Then $c_1=0$ and $c_2=5$.  The inequality 
$5 = c < c_1 + c_2 + 3=8$ has a margin of error that is large enough to accommodate the possibility that 
$\mathcal{M}_{g-t}[\xi_2]$ has codimension $4$ rather than $5$;
this is true even when $t=1$, in which case the discrepancy between $3g_1-3$ and $1$ 
reduces the value on the right hand side of the inequality by $1$.
Thus the result follows by the inductive hypothesis and similar arguments as in Proposition~\ref{Pnunonempty}.

Next, consider the partition $\xi_1=ss$ and $\xi_2 = ord^{g-4} \oplus \nu_3^0$.
Then $c_2 = 3$.  Since $5=c< c_2+3=6$, hypothesis (b) is satisfied by Lemmas~\ref{Lcodim}(2) and \ref{Lrightdim}.

By symmetry, the only remaining partitions are $\xi_1= ord^t \oplus ss$ and $\xi_2=ord^{g-4-t} \oplus \nu_3^0$,
for $1 \leq t \leq g-4$.
Then $c_1=1$ and $c_2=3$.  Since $5=c < c_1 +c_2 + 3=7$, 
hypothesis (b) is satisfied by Lemmas~\ref{Lcodim}(3) and \ref{Lrightdim}.

\item Let $\xi=ord^{g-4} \oplus ss^4$.  Then $c=6$.
If $\xi$ occurs on $\mathcal{M}_g$, then its stratum is contained in $\mathcal{M}_g^{g-4}$, 
which has codimension $4$ in $\mathcal{M}_g$.
By \cite[Theorem~4.2(b) and Lemma~5.2]{AP:gen}, for each irreducible component of $\mathcal{M}_g^{g-4}$, 
the generic Newton polygon is either $ord^{g-4} \oplus \nu_4^0$ or $ord^{g-4} \oplus \nu_3^0 \oplus ss$.
By purity, if $\xi$ occurs on $\mathcal{M}_g$, this implies that 
each irreducible component of $\mathcal{M}_g[\xi]$ has codimension either $6$ (dimensionally transverse intersection) 
or $5$ in $\mathcal{M}_g$.

The proof will be by induction on $g$.
The base case $g=4$ is \cite[Corollary~1.2]{khs20}.
Suppose that $g \geq 5$ and the result is true for all $4 \leq g' < g$.
Hypothesis (a) is true for the partition $\xi_1 = ord^{g-4}$ and $\xi_2 = ss^4$.

For hypothesis (b), consider the partition $\xi_1 = ord^t$ and $\xi_2 = ord^{g-4-t} \oplus ss^4$ 
for $1 \leq t \leq g-4$.  Then $c_1=0$ and $c_2=6$.  The inequality 
$6 = c < c_1 + c_2 + 3 =9$ has a margin of error that is large enough to accommodate the possibility that 
$\mathcal{M}_{g-t}[\xi_2]$ has codimension $5$ rather than $6$;
this is true even when $t=1$, in which case the discrepancy between $3g_1-3$ and $1$ 
reduces the value on the right hand side of the inequality by $1$.
Thus the result follows by the inductive hypothesis and similar arguments as in Proposition~\ref{Pnunonempty}.

Next, consider the partition $\xi_1=ss$ and $\xi_2 = ord^{g-4} \oplus ss^3$.
Then $c_2 = 4$.  Since $6=c< c_2+3=7$, hypothesis (b) is satisfied by Lemmas~\ref{Lcodim}(2) and \ref{Lrightdim}.

Next, consider the partition $\xi_1= ord^t \oplus ss$ and $\xi_2=ord^{g-4-t} \oplus ss^3$,
for $1 \leq t \leq g-4$.
Then $c_1=1$ and $c_2=4$.
Since $6=c < c_1 + c_2 + 3=8$, 
hypothesis (b) is satisfied by Lemmas~\ref{Lcodim}(3) and \ref{Lrightdim}.

The only remaining type of partition is 
$\xi_1= ord^t \oplus ss^2$ and $\xi_2=ord^{g-4-t} \oplus ss^2$, for $0 \leq t \leq g-4$. 
Then $c_1=c_2 = 2$.
Since $6=c < c_1 +c_2 + 3=7$, 
hypothesis (b) is satisfied by Lemmas~\ref{Lcodim}(3) and \ref{Lrightdim}.
\end{enumerate}
This completes the proof.
\end{proof}

\begin{corollary} \label{Cg4complete}
Every symmetric Newton polygon in dimension $g=4$ occurs on $\mathcal{M}_4$, 
in every prime characteristic $p$. 
\end{corollary}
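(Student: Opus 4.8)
The plan is to observe that dimension $g=4$ admits only finitely many symmetric Newton polygons and that the statement is precisely the $g=4$ instance of the last assertion of Theorem~\ref{Tprank34}: when $g=4$ the hypothesis ``$p$-rank $f\ge g-4$'' reads $f\ge 0$, which every symmetric Newton polygon satisfies. So the substance is already established, and what remains is to enumerate the symmetric Newton polygons in dimension $4$ and match each one to a result proved above.

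I would enumerate by $p$-rank $f$. For $f=4,3,2$ there is a single symmetric Newton polygon, namely $ord^4$, $ord^3\oplus ss$, and $ord^2\oplus ss^2$; for $f=1$ there are two, $ord\oplus\nu_3^0$ and $ord\oplus ss^3$; and for $f=0$ there are three, $\nu_4^0$, $\nu_3^0\oplus ss$, and $ss^4$. Completeness of this list follows by decomposing a $p$-rank-$0$ symmetric Newton polygon into isoclinic pieces: the indecomposable symmetric pieces of horizontal length at most $8$ are $G_{1,1}$ (length $2$), $G_{1,2}\oplus G_{2,1}$ (length $6$), and $G_{1,3}\oplus G_{3,1}$ (length $8$), and the only combinations of total length $8$ yield exactly the three polygons listed. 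Hence there are eight symmetric Newton polygons in dimension $4$. The first four have codimension $c\le 3$ in $\mathcal{A}_4$, so each occurs on $\mathcal{M}_4$ by Lemma~\ref{Lrightdim}; the polygon $\nu_4^0$, which is the $g=4$ case of $ord^{g-4}\oplus\nu_4^0$, occurs by the $c=4$ clause of Lemma~\ref{Lrightdim}. For the remaining three I would invoke Theorem~\ref{Tprank34} with $g=4$: the polygon $ord\oplus ss^3=ord^{g-3}\oplus ss^3$ occurs by part~(1); the polygon $\nu_3^0\oplus ss=ord^{g-4}\oplus\nu_3^0\oplus ss$ occurs by part~(2) (equivalently, by Corollary~\ref{Capplication}(1)); and the polygon $ss^4=ord^{g-4}\oplus ss^4$ occurs by part~(3) (equivalently, by Theorem~\ref{T4supersingular}). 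This exhausts the eight cases.

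There is no serious obstacle left at this point: the difficulty has been absorbed into Theorem~\ref{Tprank34}, which rests on the intersection method of Proposition~\ref{Pnunonempty} together with the input existence statements for $\nu_4^0$ on $\mathcal{M}_4$ and $\nu_3^0$ on $\mathcal{M}_3$. If anything needs care, it is only the verification that the eight-polygon list is exhaustive, and the identification of exactly which three of these strata have codimension exceeding $3$ in $\mathcal{A}_4$ --- and thus lie outside the reach of the $p$-rank stratification results of \cite{FVdG} and \cite{AP:gen} and genuinely require the new method.
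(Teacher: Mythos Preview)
Your proposal is correct and takes essentially the same approach as the paper: the paper's proof is the single sentence ``This is immediate from the final claim of Theorem~\ref{Tprank34} because the condition $f \geq g-4$ is vacuous when $g=4$,'' which is exactly your opening observation. Your additional enumeration of the eight symmetric Newton polygons in dimension~$4$ and the case-by-case matching to Lemma~\ref{Lrightdim} and Theorem~\ref{Tprank34}(1)--(3) is accurate but more explicit than what the paper records.
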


\begin{proof}
This is immediate from the final claim of Theorem~\ref{Tprank34} because the condition $f \geq g-4$ is vacuous when $g=4$.
\end{proof}

\subsection{The case of genus $5$}

\begin{corollary} \label{Cg5completeprankpos}
Every symmetric Newton polygon in dimension $g=5$ whose $p$-rank is positive occurs on $\mathcal{M}_5$,
in every prime characteristic $p$. 
\end{corollary}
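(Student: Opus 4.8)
The plan is to enumerate all symmetric Newton polygons in dimension $g=5$ with positive $p$-rank and to verify that each one occurs on $\mathcal{M}_5$, either by invoking an earlier result or by applying Proposition~\ref{Pnunonempty} directly. First I would list the Newton polygons by $p$-rank $f \in \{5,4,3,2,1\}$. For $f=5,4,3$ the codimension of the stratum in $\mathcal{A}_5$ is $0,1,2$ respectively, so Lemma~\ref{Lrightdim} handles those immediately (the unique options being $ord^5$, $ord^4 \oplus ss$, $ord^3 \oplus ss^2$). For $f \geq 1$, the case $f=5-3=2$ and $f=5-4=1$ fall under the range $f \geq g-4 = 1$, so Theorem~\ref{Tprank34} covers them; in particular $ord^2 \oplus ss^3$, $ord \oplus \nu_3^0 \oplus ss$, and $ord \oplus ss^4$ occur on $\mathcal{M}_5$, as does $ord^2 \oplus \nu_3^0$ (which is the $c=3$ case of Lemma~\ref{Lrightdim}).

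So the substance of the proof is really just bookkeeping: I would check that the list of symmetric Newton polygons in dimension $5$ with $f \geq 1$ is exactly
\[
ord^5,\quad ord^4 \oplus ss,\quad ord^3 \oplus ss^2,\quad ord^2 \oplus \nu_3^0,\quad ord^2 \oplus ss^3,\quad ord \oplus \nu_3^0 \oplus ss,\quad ord \oplus ss^4,
\]
together with one further polygon of $p$-rank $1$ that I should not overlook: namely $ord \oplus \nu_4^0$, which has $p$-rank $1$ and codimension $4$ in $\mathcal{A}_5$. This last one is precisely the $c=4$ case of Lemma~\ref{Lrightdim} (with $g-4 = 1$), so it occurs on $\mathcal{M}_5$ as well. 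After confirming that this enumeration is complete — every symmetric polygon of height $10$ with at least one slope equal to $0$ (equivalently one slope equal to $1$) must have its remaining slopes forming a symmetric polygon of dimension at most $4$, and there are only finitely many such — the corollary follows by inspecting the list against Lemma~\ref{Lrightdim} and Theorem~\ref{Tprank34}.

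The only place where genuine input beyond the earlier results of this paper is needed is the $p$-rank $0$ cases, which are explicitly excluded from the statement; so there is no real obstacle here. If I wanted to be careful, the one subtle point to watch is whether $ord \oplus \nu_4^0$ and $ord^2 \oplus \nu_3^0$ are correctly identified as the $c=4$ and $c=3$ cases of Lemma~\ref{Lrightdim}, since that lemma is stated for general $g$ with the polygon $ord^{g-4} \oplus \nu_4^0$ (resp. $ord^{g-3} \oplus \nu_3^0$), and here $g=5$ gives exactly these. Granting that, the proof is a one-line deduction: every symmetric Newton polygon in dimension $5$ with positive $p$-rank appears in the list above, and each entry occurs on $\mathcal{M}_5$ by Lemma~\ref{Lrightdim} or by Theorem~\ref{Tprank34}, in every prime characteristic $p$.
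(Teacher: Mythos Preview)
Your proposal is correct and follows essentially the same route as the paper: both deduce the corollary from Theorem~\ref{Tprank34}. The paper's proof is a single sentence, observing that for $g=5$ the condition $f \geq g-4$ is exactly $f \geq 1$, so the ``final claim'' of Theorem~\ref{Tprank34} applies directly; your explicit enumeration of the eight polygons and the case-by-case appeal to Lemma~\ref{Lrightdim} versus Theorem~\ref{Tprank34} is simply unpacking what that final claim already packages, so while accurate it is more work than needed.
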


\begin{proof}
This is immediate from the final claim of Theorem~\ref{Tprank34} because the condition $f \geq g-4$ is true by hypothesis
when $g=5$ and $f >0$.
\end{proof}

\begin{example} \label{Eg5prank0}
Here are some examples in the situation of genus $5$ and $p$-rank $0$.
\begin{enumerate}
\item When $\xi$ has slopes $(1/5, 4/5)$ with $p$-divisible group $G_{1,4} \oplus G_{4,1}$:
the method of Proposition~\ref{Pnunonempty} does not apply because $\xi$ is indecomposable as a symmetric Newton 
polygon and so hypothesis (a) is not satisfied.
For $p \equiv 3,4,5,9 \bmod 11$, it is known that this Newton polygon occurs on $\mathcal{M}_5$ by
\cite[Theorem~5.4]{LMPT1}.

\item When $\xi = \nu_4^0 \oplus ss$: then $\xi$ occurs on $\mathcal{M}_5$ by Corollary~\ref{Capplication}(2).

\item When $\xi$ has slopes $1/3, 1/2, 2/3$ with  $p$-divisible group $G_{1,2} \oplus G_{2,1} \oplus G_{1,1}^2$:
at this time, it is not known whether or not hypothesis (b) of Proposition~\ref{Pnunonempty} is satisfied. 
In more detail, consider the partition where $\xi_1$ has $p$-divisible group 
$G_{1,2} \oplus G_{2,1} \oplus G_{1,1}$ and $\xi_2=ss$. 
Note that $e(\xi_1, \mathcal{M}_4) =4$, $e(\xi_2, \mathcal{M}_{1,1})=0$, and $e(\xi, \mathcal{M}_5)=5$ and 
$0+4 < 5$.
However, it is not currently known whether hypothesis (b) is true in this case;
a priori, it is possible that an irreducible component of $\mathcal{M}_4[\xi_1]$ 
is open and dense in an irreducible component of $\mathcal{M}_4^0$, which has dimension $5$.

\item When $\xi$ has slopes $(2/5, 3/5)$ with $p$-divisible group $G_{2,3} \oplus G_{3,2}$:
the method of Proposition~\ref{Pnunonempty} does not apply because $\xi$ is indecomposable as a symmetric Newton 
polygon and so hypothesis (a) is not satisfied.
For $p \equiv 3,4,5,9 \bmod 11$, it is known that this Newton polygon occurs on $\mathcal{M}_5$ by
\cite[Theorem~5.4]{LMPT1}.

\item When $\xi=ss^5$: then hypothesis (b) of Proposition~\ref{Pnunonempty} is not satisfied.
For $p \equiv 7 \bmod 8$ and $p \gg 0$, there exists a supersingular smooth curve of genus $5$ by \cite[Theorem~1.2]{LMPT2}. 
\end{enumerate}
\end{example}

\bibliographystyle{amsalpha}
\bibliography{supersingular}

\end{document}